\documentclass[11pt]{amsart}
\usepackage[T1]{fontenc}
\usepackage{amsmath,amssymb,mathtools}
\usepackage{newtxtext,newtxmath}
\usepackage{microtype,booktabs}
\usepackage[a4paper,margin=29mm,headheight=14pt]{geometry}
\usepackage[colorlinks=true,linkcolor=blue!45!black,citecolor=blue!45!black,urlcolor=blue!45!black]{hyperref}
\usepackage{xcolor}
\numberwithin{equation}{section}
\newtheorem{theorem}{Theorem}[section]
\newtheorem{corollary}[theorem]{Corollary}
\newtheorem{proposition}[theorem]{Proposition}
\newtheorem{lemma}[theorem]{Lemma}
\theoremstyle{remark}

\allowdisplaybreaks[2]
\hypersetup{pdftitle={Quantitative hyperconvexity and the Bergman distance},pdfauthor={Bo-Yong Chen and Yuanpu Xiong},pdfsubject={Arbitrary boundary moduli and quantitative Bergman completeness}}

\title{Quantitative hyperconvexity and the Bergman distance}
\date{}
\author{Bo-Yong Chen and Yuanpu Xiong}
\subjclass[2020]{32A25, 32U35, 32T35}
\keywords{Bergman distance, hyperconvex domain, pluricomplex Green function, Monge--Amp\`ere capacity}

\begin{document}

\address [Bo-Yong Chen] {School of Mathematical Sciences, Fudan University, Shanghai, 200433, China}
\email{boychen@fudan.edu.cn}

\address [Yuanpu Xiong] {School of Mathematical Sciences and Key Laboratory of Intelligent Computing and Applications (Tongji University), Ministry of Education, Tongji University, Shanghai, 200092, CHINA}
\email{ypxiong@tongji.edu.cn}

\begin{abstract}
Let $\Omega\subset\mathbb{C}^n$ be a bounded hyperconvex domain admitting a negative plurisubharmonic exhaustion function $u$ such that $-u(z)\le C\exp[-\eta(\log(1/\delta_\Omega(z)))]$ near the boundary, where $\eta$ is nondecreasing with $\lim_{t\rightarrow+\infty}\eta(t)=+\infty$. We present a lower bound for the Bergman distance $d_{B,\Omega}(z_0,z)$ by counting the iterates of a function determined by the generalized inverse of $\eta$. The bound tends to infinity as $z\rightarrow\partial\Omega$ for every such $\eta$; moreover, it recovers several known estimates. The proof combines a one-sided localization of Green sublevel sets with boundary estimates for the Bergman kernel and a capacity--volume inequality. We also obtain an upper bound for the Bergman distance along the axis of a family of complete Hartogs domains whose fiber radii decay faster than every positive power of the boundary distance in the base disc.
\end{abstract}
\maketitle

\section{Introduction}

For a domain $\Omega\subset\mathbb{C}^n$, let $K_\Omega(z,w)$ denote the Bergman kernel and set $K_\Omega(z)=K_\Omega(z,z)$. One may define
\begin{equation}\label{eq:metric}
\beta_\Omega(z;X)^2
=\sum_{j,k=1}^n\frac{\partial^2\log K_\Omega(z)}{\partial z_j\partial\bar z_k}X_j\overline{X_k},\qquad z\in\Omega,\ X\in\mathbb{C}^n,
\end{equation}
when $K_\Omega(z)>0$ for all $z\in\Omega$. $\beta_\Omega(z;X)^2$ is called the Bergman metric when it is positive definite. The integrated distance is the Bergman distance $d_{B,\Omega}$. We say the domain is Bergman complete if $(\Omega,d_{B,\Omega})$ is a complete metric space.

It is well-known that every hyperconvex domain, that is, it admits a continuous negative plurisubharmonic exhaustion function, is Bergman complete (cf. \cite{BP98},  \cite{Herbort99}; see also  \cite{C04}). To obtain a quantitative estimate for the Bergman distance on bounded hyperconvex domains, it is natural to impose boundary decay conditions on exhaustion functions.

Diederich-Ohsawa \cite{DO95} first obtained the effective estimate
\begin{equation}\label{eq:distance_estimate_1}
d_{B,\Omega}(z,z_0)\geq c\log\log\frac{1}{\delta_\Omega(z)},\ \ \ z\rightarrow\partial\Omega,
\end{equation}
under the condition that there exists a continuous negative plurisubharmonic exhaustion function $u$ with
\begin{equation}\label{eq:DO-condition}
C^{-1}\delta_\Omega(z)^\alpha
\le -u
\le C \delta_\Omega(z)^\beta
\end{equation}
for some constants $C>0$ and $\alpha>\beta>0$. Here $\delta_\Omega(z)$ denotes the Euclidean distance from $z$ to $\partial\Omega$. This estimate was improved by B\l ocki \cite{B05} to
\begin{equation}\label{eq:power-background}
d_{B,\Omega}(z_0,z)\ge c\frac{\log(1/\delta_\Omega(z))}{\log \log(1/\delta_\Omega(z))}
\end{equation}
in the case $\alpha=\beta$ in \eqref{eq:DO-condition}, which is satisfied, e.g., when $\Omega$ is a bounded pseudoconvex domain with $C^2$ boundary (cf.  \cite{DF77}). The first author of the present paper proved that \eqref{eq:power-background} still holds under the weaker condition that the hyperconvexity index of $\Omega$ is positive, i.e.,
\begin{equation}\label{eq:Chen-condition}
-u\le C\delta_\Omega^\alpha,
\end{equation}
with $\alpha>0$ (cf. \cite{C17}). These results suggest that different rates of decay of $-u$ should imply different lower bounds for the Bergman distance. A partial result was obtained in \cite{CZ22}, which states that
\begin{equation}\label{eq:triple-log}
d_{B,\Omega}(z_0,z)\ge c\log\log\log\frac{1}{\delta_\Omega(z)}
\end{equation}
holds when $z$ is sufficiently close to $\partial\Omega$, if there exists a continuous negative plurisubharmonic exhaustion function $u$ such that
\begin{equation}\label{eq:CZ-condition}
-u\le C (\log(1/\delta_\Omega(z)))^{-\alpha},\quad z\to \partial \Omega,
\end{equation}
where $\alpha> (n-1+\sqrt{(n-1)(n+3)})/2$. It is natural to expect that this condition may be replaced by $\alpha>0$.

There are three major ingredients in the estimates of the Bergman distance: the first is Kobayashi's characterization of the Bergman metric through embedding $\Omega$ into the infinite dimensional projective Hilbert space via a/any complete orthonormal basis of $A^2(\Omega)$ (cf. \cite{Kobayashi59}); the second is weighted $L^2$ estimates for $\bar{\partial}$;  the third is the estimate of the sublevel sets of the pluricomplex Green function, which relies heavily on the ``quasi-symmetry'' of the pluricomplex Green function (see  \cite[Theorem 5.1]{B05}). The limitations of these ``quasi-symmetry'' estimates make it difficult to treat exhaustion functions with slower boundary decay.

The purpose of this paper is to treat an arbitrary nondecreasing decay function. The estimate is most naturally expressed by a recurrence.

\begin{theorem}\label{thm:main}
Let $\Omega\subset\mathbb{C}^n$ be a bounded hyperconvex domain and fix $z_0\in\Omega$. Suppose that $u<0$ is a plurisubharmonic exhaustion function and that, near $\partial\Omega$,
\begin{equation}\label{eq:assumption}
-u(z)\le C_u\exp\left[-\eta\left(\log\frac{1}{\delta_\Omega(z)}\right)\right],
\end{equation}
where $\eta:[t_0,+\infty)\rightarrow(0,+\infty)$ is nondecreasing,
$t_0\ge1$, and $\eta(t)\rightarrow+\infty$ as $t\rightarrow+\infty$. For $s\in(0,+\infty)$ define the generalized inverse function of $\eta$ by
\begin{equation}\label{eq:inverse}
\vartheta(s)=\sup\bigl(\{t_0\}\cup\{t\ge t_0:\eta(t)\le s\}\bigr).
\end{equation}
There are constants $b>0$ and $\tau_0>0$ such that, for the sequence $\{\tau_k\}$ given by
\begin{equation}\label{eq:recurrence}
\tau_{k+1}=\tau_k+b+\log(1+\vartheta(\tau_k)),\qquad k=0,1,2,\cdots,
\end{equation}
if $z$ is sufficiently close to $\partial\Omega$ and $\eta(\log(1/\delta_\Omega(z)))\geq \tau_k$, then
\begin{equation}\label{eq:main}
d_{B,\Omega}(z_0,z)\ge ck-C.
\end{equation}
Here $c,C>0$ are independent of $z$ and $k$.
\end{theorem}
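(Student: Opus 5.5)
The plan is to follow the Kobayashi–Błocki chain strategy. Using Kobayashi's embedding into projective Hilbert space, the lower bound reduces to a normalized-kernel inequality. If $z_j\to\partial\Omega$ is a sequence of points with
\[
\frac{|K_\Omega(z_j,z_{j+1})|^2}{K_\Omega(z_j)K_\Omega(z_{j+1})}\le 1-\varepsilon_0
\]
for a uniform $\varepsilon_0>0$, then each step contributes a definite amount $c>0$ to the Bergman distance. This holds because the Fubini–Study distance between the images of $z_j$ and $z_{j+1}$ is bounded below, and the Bergman distance dominates it.

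Rather than pick points, we work with Green sublevel sets and compare $L^2$ norms. Write $g_\Omega(\cdot,w)$ for the pluricomplex Green function and $K_w=K_\Omega(\cdot,w)/\sqrt{K_\Omega(w)}$. By the standard Donnelly–Fefferman / Berndtsson weighted $L^2$ estimate (the one used in \cite{B05,C17}), there is an inequality of the form
\[
\int_{\{g_\Omega(\cdot,w)<-1\}}|K_w|^2\ge c_0>0
\]
for every $w$. The mass of $K_w$ therefore concentrates on the sublevel set $A_w=\{g_\Omega(\cdot,w)<-1\}$. If two points $w,w'$ have $A_w$ and $A_{w'}$ essentially disjoint, meaning $\int_{A_{w'}}|K_w|^2$ is small, then $|\langle K_w,K_{w'}\rangle|\le 1-\varepsilon_0$. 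The whole matter thus reduces to localizing $A_w$ one-sidedly toward the boundary. We would show that if $-u(w)\le e^{-\tau_{k+1}}$, then every point of $A_w$ satisfies $-u\le e^{-\tau_k}$. Equivalently, $A_w$ lies in the layer $\{-u<e^{-\tau_k}\}$, while the mass of $K_{w'}$ for $w'$ deeper inside sits in $\{-u>e^{-\tau_k}\}$, up to a small error.

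To obtain this one-sided localization without quasi-symmetry, we use the capacity–volume inequality together with the hypothesis \eqref{eq:assumption}. Using $u$ as a barrier, $g_\Omega(\zeta,w)\ge$ (a multiple of) $u(\zeta)/\,(-u(w))$-type comparisons fail in general. Instead we estimate the relative Monge–Ampère capacity of the compact part $A_w\cap\{-u\ge e^{-\tau_k}\}$ with respect to $\Omega$. Since $g_\Omega(\cdot,w)$ has Monge–Ampère mass at $w$ only, the comparison principle gives
\[
\mathrm{cap}(A_w\cap\{-u\ge s\})\lesssim \frac{-u(w)}{s}
\]
after comparing with $\max\{u/s,-1\}$. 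The capacity–volume inequality (Alexander–Taylor type, $\mathrm{vol}(E)\le C\exp(-c\,\mathrm{cap}(E)^{-1/n})$) then shows that this piece has tiny volume. To convert tiny volume of $\{g<-1\}\cap\{-u\ge s\}$ into tiny $L^2$ mass of $K_w$, we use boundary estimates of the Bergman kernel. These give $K_\Omega(\zeta)\lesssim \delta_\Omega(\zeta)^{-2n}$, and on $\{-u\ge s\}$ the assumption \eqref{eq:assumption} forces $\log(1/\delta_\Omega)\le \vartheta(\log(C_u/s))$. Hence $|K_w(\zeta)|^2\le K_\Omega(\zeta)\le \exp(2n\,\vartheta(\tau_k+O(1)))$ there.

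The required smallness is
\[
\mathrm{vol}\cdot\sup K\lesssim \exp\!\big(-c\,e^{(\tau_{k+1}-\tau_k)/n}+2n\vartheta(\tau_k)\big)\ll 1.
\]
This holds precisely when $\tau_{k+1}-\tau_k\ge b+\log(1+\vartheta(\tau_k))$ with $b$ large, which is why the recurrence \eqref{eq:recurrence} appears. With this in hand, we choose points $w_j$ along a path from $z_0$ to $z$ with $-u(w_j)\approx e^{-\tau_j}$. This is possible because $u$ is continuous or after regularization, and a path meets every level. The condition $\eta(\log 1/\delta_\Omega(z))\ge\tau_k$ gives $-u(z)\le C_ue^{-\tau_k}$, so at least $k-O(1)$ such points exist. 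Each consecutive pair is $d_B$-separated by $c$, and summing yields \eqref{eq:main}.

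The main obstacle is the capacity step. It requires bounding $\mathrm{cap}(\{g_\Omega(\cdot,w)<-1\}\cap\{-u\ge s\})$ by a power of $-u(w)/s$ without quasi-symmetry of $g_\Omega$. The first thing to try is the comparison principle with $\psi=\max\{g_\Omega(\cdot,w),\,-1\}$ against $u/s$, integrating $(dd^c\psi)^n$, whose total mass is $1$, against $-u$. This yields $\int(-u)(dd^c\psi)^n=-u(w)$, and from there the capacity bound follows through the Błocki/Chen inequality $\mathrm{cap}(\{\ldots\})\le\int(-u/s)(dd^c\psi)^n$.
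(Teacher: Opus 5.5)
Your treatment of the near-boundary pole $w$ matches the paper. You bound $\mathrm{Cap}_\Omega(A_\Omega(w,-1)\cap\{-u>s\})\lesssim -u(w)/s$ by B\l ocki's inequality, pass to volume by a capacity--volume inequality, and multiply by $\sup_{\{-u>s\}}K_\Omega\le\exp(2n\vartheta(\cdot))$. This only controls the part of $A_\Omega(w,-1)$ away from the boundary layer.

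\textbf{The gap.} To make $\int_{A_\Omega(w,-1)}|k_{w'}|^2$ small, you also need the normalized kernel $k_{w'}$ of the \emph{deeper} point to carry little mass on a layer $\{-u\le\varepsilon\}$. You only assert this (``up to a small error''), and it is the missing idea, not a routine step. The available $L^2$ estimate (Lemma~\ref{lem:boundary-mass-0}) gives $\int_{\{-\rho\le\varepsilon\}}|K_\Omega(\cdot,w')|^2\le C_\beta K_{\Omega_a}(w')(\varepsilon/a)^\beta$ for $w'\in\Omega_a=\{-\rho>a\}$. That is relative to the kernel of a \emph{subdomain}. You would therefore need $K_{\Omega_a}(w')\lesssim K_\Omega(w')$, with $a$ smaller than $-\rho(w')$ by at most a factor polynomial in $\vartheta$.

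The paper obtains this from Proposition~\ref{prop:localization}: $A_\Omega(w',-1)\subset\{-\rho>\mu(w')\}$ with $\mu(w')=-c_0\rho(w')/(1+\vartheta(\log(-1/\rho(w'))+a))$. This is proved via the translation estimate of Lemma~\ref{lem:translation} (a second use of the decay hypothesis) and an explicit barrier for $g_\Omega(\cdot,w')$. It is then combined with $K_{A_\Omega(z,-1)}(z)\le L_nK_\Omega(z)$.

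This is the reverse of the localization you announce. The sublevel set of the near-boundary pole is controlled only in measure. The containment actually needed is that the sublevel set of the deeper pole does not reach far toward $\partial\Omega$. If you bound $K_{\Omega_a}(w')/K_\Omega(w')$ merely through boundary distances, you lose a factor of order $e^{2n\vartheta}$. That forces increments of size $\vartheta(\tau_k)$ rather than $\log(1+\vartheta(\tau_k))$, so the stated recurrence is out of reach.

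\textbf{Smaller corrections.} Your recurrence arithmetic is off for $n\ge2$. The condition $c\,e^{(\tau_{k+1}-\tau_k)/n}\ge 2n\vartheta(\tau_k+O(1))+M$ needs $\tau_{k+1}-\tau_k\ge n\log(1+\vartheta(\tau_k+O(1)))+b$, so one step of the recurrence does not separate consecutive points. The paper handles this as follows:
\begin{itemize}
\item it spends one step of $F(s)=s+b+\log(1+\vartheta(s+a))$ on the boundary-mass part and $n$ steps on the volume part;
\item it separates only points $n+1$ steps apart, so $c$ is of order $1/(n+1)$;
\item it absorbs the $O(1)$ shift inside $\vartheta$ by using the levels $e^{-s_k}$ with $s_k=\tau_k-a$.
\end{itemize}
Also, $\int(-u)\,(dd^c\max\{g_\Omega(\cdot,w),-1\})^n$ is not equal to $-u(w)$. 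It is at most $-u(w)$, since these are Jensen measures for $w$, and that suffices. Finally, $u$ need not be continuous. It is cleaner to replace it, as the paper does, by the relative extremal function $\rho$ of a closed ball around $z_0$. This $\rho$ is continuous on $\overline\Omega$ and satisfies $-\rho\le C_Be^{-\eta(\log(1/\delta_\Omega))}$, which makes the crossing of levels along a path rigorous.
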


Notice that $\tau_k\rightarrow+\infty$ as $k\rightarrow+\infty$. We can choose arbitrarily large $k=k(z)$ for $z$ sufficiently close to $\partial\Omega$. Consequently,
\[
\lim_{z\rightarrow\partial\Omega}d_{B,\Omega}(z_0,z)=+\infty,
\]
which also recovers the Bergman completeness of a bounded hyperconvex domain. Indeed, if $u$ is a negative plurisubharmonic exhaustion function on $\Omega$, then we may take
\[
\eta(t)=-\log\sup_{\delta_\Omega\le e^{-t}}(-u)
\]
for sufficiently large $t$.

An integral estimate follows from Theorem~\ref{thm:main}.

\begin{corollary}\label{cor:integral}
Under the assumptions of Theorem~\ref{thm:main}, there is $s_0>0$
such that for $z$ sufficiently close to $\partial\Omega$
\begin{equation}\label{eq:integral}
d_{B,\Omega}(z_0,z)
\gtrsim\int_{s_0}^{\eta(\log(1/\delta_\Omega(z)))}\frac{\mathrm ds}{1+\log \vartheta(s)}.
\end{equation}
If, in addition, $\eta$ is continuously differentiable and strictly increasing, then, for a suitable $t_0$,
\begin{equation}\label{eq:differential}
d_{B,\Omega}(z_0,z)
\gtrsim\int_{t_0}^{\log(1/\delta_\Omega(z))}\frac{\eta'(t)}{1+\log t}\,\mathrm{d} t
\end{equation}
holds for $z$ sufficiently close to $\partial\Omega$.
\end{corollary}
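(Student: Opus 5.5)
We deduce the corollary from Theorem~\ref{thm:main} by comparing the recurrence \eqref{eq:recurrence} with an integral. Write $L(s)=b+\log(1+\vartheta(s))$. Since $\eta$ is nondecreasing, $\vartheta$ is nondecreasing, and so is $L$. Moreover $\vartheta\ge t_0\ge1$, so $L(s)\asymp 1+\log\vartheta(s)$, with constants depending only on $b$.

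\textbf{Step 1 (each step of the recurrence costs a bounded amount of integral).} Put $s_0=\tau_0$. On $[\tau_k,\tau_{k+1}]$ the integrand is at least its value at the right endpoint, because $\vartheta$ is nondecreasing. Hence
\[
\int_{\tau_k}^{\tau_{k+1}}\frac{\mathrm ds}{L(s)}\le\frac{\tau_{k+1}-\tau_k}{L(\tau_{k+1})}=\frac{L(\tau_k)}{L(\tau_{k+1})}\le1 .
\]
Summing over $k$ gives
\[
\int_{\tau_0}^{\tau_{K}}\frac{\mathrm ds}{L(s)}\le K .
\]

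\textbf{Step 2 (choose $k$ as large as allowed).} Since $\tau_{k+1}-\tau_k\ge b>0$, we have $\tau_k\to+\infty$. Given $z$ near $\partial\Omega$, let $S=\eta(\log(1/\delta_\Omega(z)))$ and let $k$ be the largest index with $\tau_k\le S$. Then $S<\tau_{k+1}$, and by Step 1
\[
\int_{s_0}^{S}\frac{\mathrm ds}{L(s)}\le\int_{\tau_0}^{\tau_{k+1}}\frac{\mathrm ds}{L(s)}\le k+1 .
\]
Theorem~\ref{thm:main} gives $d_{B,\Omega}(z_0,z)\ge ck-C$. Therefore
\[
d_{B,\Omega}(z_0,z)\ge c\int_{s_0}^{S}\frac{\mathrm ds}{L(s)}-C' .
\]
Since $L(s)\lesssim 1+\log\vartheta(s)$, the integral $\int_{s_0}^{S}\mathrm ds/(1+\log\vartheta(s))$ is bounded by a constant multiple of the integral just displayed. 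To absorb the additive constant $C'$ into $\gtrsim$, we split into two cases. If the integral is large, we have $c\cdot(\text{integral})-C'\ge\frac c2(\text{integral})$. If the integral is bounded, we enlarge $s_0$ appropriately, or simply use Bergman completeness ($d_{B,\Omega}\to\infty$, which follows from the theorem) to dominate a bounded quantity. This gives \eqref{eq:integral}.

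\textbf{Step 3 (differential form).} Assume $\eta\in C^1$ and strictly increasing. Then on the range of $\eta$ we have $\vartheta(\eta(t))=t$ for $t\ge t_0$; we enlarge $t_0$ if needed so that $\eta(t_0)\ge s_0$. Substitute $s=\eta(t)$ in \eqref{eq:integral}. The part of the integral over $[s_0,\eta(t_0)]$ is a nonnegative constant, so it can be dropped. This yields
\[
\int_{t_0}^{\log(1/\delta_\Omega(z))}\frac{\eta'(t)}{1+\log t}\,\mathrm dt ,
\]
which is \eqref{eq:differential}.

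The only delicate point is the monotonicity comparison $L(\tau_k)\le L(\tau_{k+1})$ in Step 1, which is what makes each recurrence step cost at most a unit of integral. A secondary point is the bookkeeping of the additive constant, which is harmless since $d_{B,\Omega}(z_0,z)\to\infty$.
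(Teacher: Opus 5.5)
Your overall route is the paper's: bound the integral over each recurrence interval by $1$, take $k$ maximal with $\tau_k\le\eta(\log(1/\delta_\Omega(z)))$, apply Theorem~\ref{thm:main}, and substitute $s=\eta(t)$. However, Step~1 as written has the inequality backwards. Since $L$ is nondecreasing, $1/L(s)\ge 1/L(\tau_{k+1})$ on $[\tau_k,\tau_{k+1}]$, and this gives
\[
\int_{\tau_k}^{\tau_{k+1}}\frac{\mathrm ds}{L(s)}\ \ge\ \frac{\tau_{k+1}-\tau_k}{L(\tau_{k+1})}.
\]
That is a lower bound, not the upper bound you wrote. The inequality you state is in fact strictly false whenever $L$ is not constant on the interval. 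So the step you call ``the only delicate point'', namely $L(\tau_k)\le L(\tau_{k+1})$, sits inside an invalid chain.

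The correct argument, which is the paper's, uses the left endpoint. On $[\tau_k,\tau_{k+1}]$ we have $1/L(s)\le 1/L(\tau_k)$, hence
\[
\int_{\tau_k}^{\tau_{k+1}}\frac{\mathrm ds}{L(s)}\le\frac{\tau_{k+1}-\tau_k}{L(\tau_k)}=1,
\]
because the recurrence \eqref{eq:recurrence} makes the step length exactly $L(\tau_k)$. That is the whole design of the recurrence, and no comparison between $L(\tau_k)$ and $L(\tau_{k+1})$ is needed.

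With that fix, the rest of your argument is sound.

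Your case split for absorbing the additive constant is more careful than the paper's ``$k\ge\int-1\gtrsim\int$''. The integral in \eqref{eq:integral} need not tend to infinity: for instance, $\eta(t)=\log\log t$ gives $1+\log\vartheta(s)=1+e^{s}$, a convergent integral. In that case one really does need $d_{B,\Omega}(z_0,z)\to+\infty$ to dominate a bounded right-hand side, exactly as you say.

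Step~3 is also correct. Enlarging $t_0$ so that $\eta(t_0)\ge s_0$ and discarding the nonnegative piece over $[s_0,\eta(t_0)]$ properly fills in the substitution that the paper leaves implicit.
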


Here and below $f\gtrsim g$ (resp. $f\lesssim g$) means that $f\ge cg$ (resp. $f\le C g$) with constants independent of $z$. In particular, \eqref{eq:differential} gives
\begin{align}\label{eq:eta-loglog}
d_{B,\Omega}(z_0,z)
&\gtrsim\frac{1}{1+\log\log(1/\delta_\Omega(z))}\int_{t_0}^{\log(1/\delta_\Omega(z))}\eta'(t)\,\mathrm{d} t \notag\\
&\gtrsim\frac{\eta(\log(1/\delta_\Omega(z)))}{\log\log(1/\delta_\Omega(z))}
\end{align}
under the additional assumption that
\[
\lim_{t\rightarrow+\infty}\frac{\eta(t)}{\log{t}}=+\infty.
\]
When $\eta(t)=\alpha t$, i.e., $\Omega$ has positive hyperconvexity index, \eqref{eq:eta-loglog} becomes the estimate \eqref{eq:power-background} in \cite{C17}. \eqref{eq:eta-loglog} is weaker than \eqref{eq:differential}. For example, when $\eta(t)=\alpha\log t$,
\begin{align}\label{eq:triple}
d_{B,\Omega}(z_0,z)
&\gtrsim \int_{t_0}^{\log(1/\delta_\Omega(z))}\frac{1}{t(1+\log t)}\,\mathrm{d} t \notag\\
&\gtrsim\log\log\log\frac1{\delta_\Omega(z)}.
\end{align}

The main point of the proof of Theorem \ref{thm:main} is to estimate the mass of a normalized Bergman kernel on a Green sublevel set. We first obtain a one-sided localization in terms of a relative extremal function, and then divide the sublevel set into a boundary part, where an $L^2$ estimate from \cite{C17} applies, and an interior part, whose volume is controlled by its Monge--Amp\`ere capacity (cf. \cite{ACKPZ09}). This gives a fixed positive Bergman distance after at most $n+1$ iterations of $F$. Counting successive levels along an arbitrary curve proves the theorem.

There are many domains satisfying \eqref{eq:assumption} for various choices of $\eta$. Namely, we have

\begin{theorem}\label{thm:eta-choice}
For every nondecreasing function $\eta:[t_0,\infty)\to(0,\infty)$  with $\eta(t)\rightarrow+\infty$ and $\eta(t)=o(t)$ as $t\rightarrow+\infty$,   there exists a Hartogs domain $\Omega\subset\mathbb{C}^2$ with a continuous negative plurisubharmonic exhaustion $u$ satisfying \eqref{eq:assumption} near $\partial\Omega$. Moreover, the hyperconvexity index of $\Omega$ is zero.
\end{theorem}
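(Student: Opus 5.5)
We build $\Omega$ as a Hartogs domain over the unit disc,
\[
\Omega=\{(z,w)\in\mathbb{C}^2: z\in\mathbb{D},\ |w|<e^{-\varphi(z)}\},
\]
where $\varphi$ is a radial subharmonic function on $\mathbb{D}$ with $\varphi(z)\to+\infty$ as $|z|\to1$, chosen so that the fiber radius $e^{-\varphi}$ decays in a prescribed way. Pseudoconvexity follows from the subharmonicity of $\varphi$. The natural exhaustion candidate is
\[
u(z,w)=\max\{\log|w|+\varphi(z),\ \psi(z)\},
\]
or a convex combination of such terms, suitably normalized. Here $\psi<0$ is a negative subharmonic exhaustion of $\mathbb{D}$ (for instance $\psi(z)=-(1-|z|^2)^{\gamma}$ with small $\gamma$, or something slower, matched to $\eta$). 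Since $\log|w|+\varphi(z)$ is plurisubharmonic and tends to $0$ on the part of $\partial\Omega$ lying over $\mathbb{D}$, and $\psi$ handles the part over $\partial\mathbb{D}$, a maximum of this type should be a continuous negative plurisubharmonic exhaustion.

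The decay requirement \eqref{eq:assumption} near the boundary splits into two regimes.

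\textbf{Points near the side $|w|=e^{-\varphi(z)}$ with $z$ interior.} There $\delta_\Omega\approx e^{-\varphi}-|w|$, up to the slope of $e^{-\varphi}$. The term $\log|w|+\varphi$ then behaves like $-\delta_\Omega e^{\varphi}$, which is a power decay, so it is far better than $\exp[-\eta(\log 1/\delta)]$ because $\eta=o(t)$ still allows $\eta(t)\le t$ eventually. The harmful term is $\psi$: we need $-\psi(z)\le C\exp[-\eta(\log 1/\delta_\Omega)]$ only where $\psi$ is the larger term. Hence the max should be arranged so that $\psi$ dominates only when $1-|z|$ is comparable to a function of $\delta_\Omega$.

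\textbf{Points near $|z|=1$.} Here $\delta_\Omega\le e^{-\varphi(z)}$, so $\log(1/\delta_\Omega)\ge\varphi(z)$ and $\eta(\log 1/\delta_\Omega)\ge\eta(\varphi(z))$. The requirement therefore reduces to
\[
-u(z,w)\lesssim \exp[-\eta(\varphi(z))].
\]
This is achievable by choosing $\psi$ with $-\psi(z)\le \exp[-\eta(\varphi(z))]$, together with the first term when $|w|$ is near the rim.

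We therefore choose $\varphi$ radial, growing very fast (e.g. $\varphi(z)=\tilde\eta^{-1}$-type growth in $\log\frac1{1-|z|}$). The goal is that a negative subharmonic $\psi$ on $\mathbb{D}$ with $-\psi\le e^{-\eta(\varphi)}$ exists. Since $\varphi$ can be taken arbitrarily large, with $\varphi(z)\ge\Phi(1/(1-|z|))$ for any $\Phi$, we can take $\psi=-(1-|z|)^{1/2}$, say, and choose $\varphi$ so that $\eta(\varphi(z))\ge\frac12\log\frac1{1-|z|}$. This works because $\eta\to\infty$. Radial subharmonicity of $\varphi$ is easy: take $\varphi(r)$ convex and increasing in $\log r$, and smooth the generalized inverse of $\eta$ from above.

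Zero hyperconvexity index is where $\eta=o(t)$ enters. The argument runs by contradiction. Suppose some exhaustion satisfies $-v\le C\delta_\Omega^\alpha$. Restrict $v$ to the fiber disc $\{z\}\times\{|w|<e^{-\varphi(z)}\}$, or better to the centre point $(z,0)$, and compare with the Green function or harmonic measure of the fiber. The fiber has radius $e^{-\varphi(z)}$, and the points $(z,0)$ lie at distance $\delta_\Omega(z,0)\approx e^{-\varphi(z)}$, which is tiny relative to $1-|z|$. By the maximum principle on the fiber, $-v(z,0)\le C e^{-\alpha\varphi(z)}$. Then apply a sub-mean-value/Harnack argument to the subharmonic $z\mapsto v(z,0)$ on $\mathbb{D}$, which is negative and not identically zero. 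It must satisfy $-v(z,0)\ge c(1-|z|)$ by the Hopf lemma. This contradicts $e^{-\alpha\varphi(z)}\ll 1-|z|$ once $\varphi$ grows faster than $\log\frac1{1-|z|}$.

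The construction must therefore make $\varphi(z)/\log\frac{1}{1-|z|}\to\infty$ while keeping the first regime's estimate valid. The main obstacle is reconciling the two regimes near the corner where $|z|\to1$ and $|w|\to e^{-\varphi}$. There the slope of $e^{-\varphi}$ is huge, $\delta_\Omega$ is not comparable to $e^{-\varphi}-|w|$, and we need the precise compatibility: $\varphi$ growing superlogarithmically but with $\eta(\varphi)$ still only logarithmic in $1/(1-|z|)$. This is exactly what $\eta(t)=o(t)$ permits, and it is where we expect to spend the most care, checking $\delta_\Omega\gtrsim e^{-\varphi}\,\frac{(e^{-\varphi}-|w|)}{\,e^{-\varphi}+|\nabla e^{-\varphi}|\,}$-type bounds.
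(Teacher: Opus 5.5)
Your architecture matches the paper's. You use a Hartogs domain over $\mathbb{D}$ with fiber radius $e^{-\varphi}$ and a max-type exhaustion. Your zero-index argument, comparing $-v(z,0)\ge c(1-|z|)$ on the axis with $\delta_\Omega(z,0)\le e^{-\varphi(z)}$, is correct.

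The gap is in the decay estimate, where the inequalities point the wrong way. In your second regime you use $\log(1/\delta_\Omega)\ge\varphi(z)$ to ``reduce'' \eqref{eq:assumption} to $-u\lesssim e^{-\eta(\varphi(z))}$. Since $\eta$ is nondecreasing, this lower bound gives $e^{-\eta(\log(1/\delta_\Omega))}\le e^{-\eta(\varphi(z))}$. So your reduced inequality is weaker than what is required; you need an \emph{upper} bound for $\log(1/\delta_\Omega)$.

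Similarly, with $\psi=-(1-|z|)^{1/2}$ the condition $-\psi\le e^{-\eta(\varphi)}$ means $\eta(\varphi(z))\le\frac12\log\frac1{1-|z|}$, not $\ge$. Nor can $\varphi$ be ``taken arbitrarily large''. Your own Hopf argument gives $-u(z,0)\ge c(1-|z|)$ for every negative plurisubharmonic exhaustion. Combined with $\log(1/\delta_\Omega(z,0))\ge\varphi(z)$, \eqref{eq:assumption} at $(z,0)$ then forces $\eta(\varphi(z))\le\log\frac1{1-|z|}+O(1)$. Smoothing the generalized inverse of $\eta$ ``from above'' puts $\varphi$ on the wrong side of this constraint.

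Your last paragraph names the right compatibility: $\varphi$ superlogarithmic, with $\eta(\varphi)$ at most logarithmic. But nothing in the proposal produces such a $\varphi$, and that construction is the real content. The paper supplies it in Lemma~\ref{lem:hartogs-profile}. It takes $\varphi(z)=f(\log\frac{e}{1-|z|^2})$ with $f(t)=2t+\sum_{j}\chi(t-t_j)$. The breakpoints $t_j$ are chosen so that $\eta(3(j+2)t)\le t/2$ for $t\ge t_j$; this is where $\eta=o(t)$ is used. The result is $f$ convex increasing, $f(t)/t\to\infty$, $\eta(3f(t))\le t/2$ for large $t$, and $f'(t)\le t+2$.

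That last derivative bound also removes what you call the main obstacle. Near $|z|=1$ the slope of $e^{-\varphi}$ is uniformly small, not huge. With $t=\log\frac{e}{1-|z|^2}$ one gets $|\nabla e^{-\varphi}|\le 2e^{t-1}(t+2)e^{-2t}\le 6e^{-2}$. Hence $\delta_\Omega\asymp e^{-\varphi}-|w|$ everywhere, corner included.

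Your first regime, by contrast, is not uniform as stated. There $-\log|w|-\varphi\approx\delta_\Omega e^{\varphi}$ carries the unbounded factor $e^{\varphi}$. The uniform estimate comes from the paper's case split on $\tau=\log\frac{1}{1-|w|e^{\varphi}}$:
\begin{itemize}
\item If $\tau\le\varphi$, then $\log(1/\delta_\Omega)\le3\varphi$. The base term suffices because $\eta(3\varphi)\le\frac12\log\frac{e}{1-|z|^2}+O(1)$.
\item If $\tau>\varphi$, then $\log(1/\delta_\Omega)\le3\tau$, while $-u\le 2e^{-\tau}$ and $\eta(3\tau)\le\tau/2+O(1)$.
\end{itemize}
With these two ingredients, your exhaustion $\max\{\log|w|+\varphi,\psi\}$ works as well as the paper's $\max\{|z|^2-1,|w|^2e^{2\varphi}-1\}$.
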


The lower bound in Theorem 1.1 is probably optimal. To explore this possibility, we consider the following family of Hartogs domains.

\begin{theorem}\label{thm:hartogs-alpha}
For $\alpha>1$, put
\begin{equation}\label{eq:hartogs-alpha-domain}
\Omega_\alpha=\left\{(z,w)\in\mathbb C^2:|z|<1,\quad |w|<\exp\left[-\left(\log\frac{e}{1-|z|^2}\right)^\alpha\right]\right\}.
\end{equation}
Let $\alpha>1$ and $P_t=(1-t,0)$, where $0<t<1/2$. Then, as
$t\to0^+$,
\begin{equation}\label{eq:hartogs-alpha-distance-delta}
\frac{(\log(1/\delta_{\Omega_\alpha}(P_t)))^{1/\alpha}}{\log\log(1/\delta_{\Omega_\alpha}(P_t))}
\lesssim d_{B,\Omega_\alpha}((0,0),P_t)
\lesssim\left(\log\frac1{\delta_{\Omega_\alpha}(P_t)}\right)^{(\alpha+1)/(2\alpha)}.
\end{equation}
\end{theorem}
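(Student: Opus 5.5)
The plan is to prove the two inequalities in \eqref{eq:hartogs-alpha-distance-delta} separately. First we locate $\delta_{\Omega_\alpha}(P_t)$. At $P_t$ we have $1-|z|^2\approx 2t$, so the fiber radius is $r(t)=\exp[-(\log(e/(2t)))^\alpha]$ up to harmless constants. This radius is much smaller than $t$, so $\delta_{\Omega_\alpha}(P_t)\approx r(t)$. Hence
\[
\log\frac1{\delta_{\Omega_\alpha}(P_t)}\approx\Bigl(\log\frac1t\Bigr)^\alpha ,
\]
and both bounds reduce to bounds in terms of $L:=\log(1/t)$. The lower bound becomes $L/\log L$, and the upper bound becomes $L^{(\alpha+1)/2}$.

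\textbf{Lower bound.} We apply Corollary~\ref{cor:integral}, via \eqref{eq:eta-loglog}, with an explicit exhaustion. The candidate is
\[
u=\max\Bigl\{\,|z|^2-1,\ \bigl(\log|w|+(\log(e/(1-|z|^2)))^\alpha\bigr)/M\,\Bigr\}.
\]
Here $\log|w|+\varphi(z)$ is plurisubharmonic because $\varphi(z)=(\log(e/(1-|z|^2)))^\alpha$ is subharmonic. Indeed, $-\log(1-|z|^2)$ is subharmonic, and composing it with a convex increasing function preserves this. This $u$ is bounded above but does not tend to $0$ at the boundary. So instead we use a standard bounded exhaustion, for instance $-(-\log(\ldots))^{-\gamma}$ type compositions, or simply $\max\{|z|^2-1,\ -\varphi^{-1}\ldots\}$. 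The computation we need is that near a boundary point
\[
-u\lesssim \exp(-c\,\eta),\qquad \eta(s)\approx s^{1/\alpha}
\]
in the variable $s=\log(1/\delta)$. The reason is that $1-|z|^2\ge \exp(-c\,s^{1/\alpha})$ whenever $w$ is at distance $e^{-s}$ from the fiber boundary, and near the side $|z|=1$ the term $1-|z|^2\lesssim\delta$ is even smaller. With $\eta(s)=c\,s^{1/\alpha}$ we have $\eta(t)/\log t\to\infty$, so \eqref{eq:eta-loglog} gives exactly $s^{1/\alpha}/\log s$. The delicate point is the fiber-boundary part, where $\delta$ is governed by $|w|$ and not by $|z|$. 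There we would take
\[
u=\max\bigl\{|z|^2-1,\ -\bigl(\varphi(z)+\log|w|\ \text{ truncated}\bigr)\ldots\bigr\}.
\]
We verify $-u\le C(1-|z|^2)^{c}$ on the set where $|w|\ge r(|z|)/2$, and on the remaining set we use $\delta\approx$ the fiber gap.

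\textbf{Upper bound.} This is the main obstacle. We integrate the Bergman metric along the segment $\gamma(s)=(s,0)$, $0\le s\le 1-t$, which requires an upper estimate of $\beta_{\Omega_\alpha}((s,0);\partial_z)$. Since $\Omega_\alpha$ is a complete Hartogs domain, the Bergman space splits into the pieces $f_k(z)w^k$. On the axis, the kernel restricted to $w=0$ equals the weighted Bergman kernel $K_{\rho}(z)$ on the disc, with weight
\[
\rho(z)=\pi r(z)^2=\pi\exp\bigl[-2(\log(e/(1-|z|^2)))^\alpha\bigr].
\]
The metric in the $z$-direction at $w=0$ equals $\partial\bar\partial\log K_\rho$, because the $k\ge1$ terms vanish at $w=0$ together with their $z$-derivatives. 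Since the weight $\rho$ is radial, $K_\rho(z)=\sum_j |z|^{2j}/c_j$ with moments
\[
c_j=2\pi\int_0^1 r^{2j+1}\rho(r)\,dr .
\]
We estimate $c_j$ by Laplace's method. Writing $r^{2j}=e^{-2j\epsilon}$ with $\epsilon=1-r$, the integrand is maximized where $j\approx \alpha(\log(1/\epsilon))^{\alpha-1}/\epsilon$, with an explicit Gaussian width. Then $\log K_\rho(s)$, as a function of $x=\log(1/(1-s))$, behaves like a Legendre-type transform, giving $\log K_\rho\approx 2x^\alpha$. The metric density satisfies $\beta^2\approx\frac{d^2}{d(\log s)^2}\log K$, which is the variance of $j$ under the weights $s^{2j}/c_j$. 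We would bound this variance by $j_*^2/(\text{effective curvature})$. In the $x$-variable this yields
\[
\beta\,ds\lesssim x^{(\alpha-1)/2}\,dx .
\]
Integrating from $0$ to $L$ gives $L^{(\alpha+1)/2}$, which, after substituting $L\approx(\log(1/\delta))^{1/\alpha}$, is $(\log(1/\delta))^{(\alpha+1)/(2\alpha)}$.

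The hardest step is obtaining rigorous two-sided Laplace asymptotics for $c_j$, uniform in $j$, that are sharp enough to control the second derivative of $\log K_\rho$ and not just $\log K_\rho$ itself. My first attempt would use log-concavity of $j\mapsto c_j$, which follows from Hölder's inequality, together with explicit bounds on the ratios $c_{j+1}/c_j$, to bound the variance of $j$ directly.
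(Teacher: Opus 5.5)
Your architecture matches the paper's. On the axis $\delta_{\Omega_\alpha}\asymp F_\alpha$, so $\log(1/t)\asymp(\log(1/\delta))^{1/\alpha}$. The lower bound comes from an exhaustion with $\eta(s)\asymp s^{1/\alpha}$ fed into \eqref{eq:eta-loglog}. The upper bound comes from integrating the axial metric, which equals $\partial\bar\partial\log K_\rho$ for your weight $\rho=\pi F_\alpha^2=\pi e^{-\varphi}$, $\varphi=2(\log\frac{e}{1-|z|^2})^\alpha$. The gap is in the upper bound: the one estimate it rests on is only announced, and the tool you name for it cannot deliver it. The lower-bound verification is also garbled, though fixable.

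\textbf{Upper bound.} Write $r=|z|$, $\epsilon=1-r$, $p_j\propto r^{2j}/c_j$. You must show $\mathrm{Var}_p(j)\lesssim\epsilon^{-2}(\log(1/\epsilon))^{\alpha-1}$ uniformly as $r\to1$; this is \eqref{eq:hartogs-alpha-metric}. Leading-order Laplace asymptotics for $c_j$, or $\log K_\rho\approx2x^\alpha$, control $\log K_\rho$ but say nothing about its second derivative. Log-concavity does not rescue this. H\"older gives log-\emph{convexity} of $j\mapsto c_j$ (not log-concavity), so $(p_j)$ is a log-concave sequence, and log-concavity alone puts no upper bound on a variance. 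What you would need is a strong-convexity estimate $\log\frac{c_{j+1}c_{j-1}}{c_j^2}\gtrsim\epsilon^2(\log(1/\epsilon))^{1-\alpha}$. It must hold for all $j$ in a window around the mode $j_*\approx\alpha(\log(1/\epsilon))^{\alpha-1}/\epsilon$ of width a large multiple of $\epsilon^{-1}(\log(1/\epsilon))^{(\alpha-1)/2}$; log-concavity then handles the tails. Since $c_{j\pm1}=c_j\,E_j[\varrho^{\pm2}]$, with $E_j$ the expectation for the density proportional to $\varrho^{2j+1}e^{-\varphi(\varrho)}$ on $(0,1)$, this is a lower bound on the spread of that measure. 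Proving it requires an upper bound on the curvature of $\varphi$ at scale $s(z)=(1-|z|^2)(\log\frac{e}{1-|z|^2})^{-(\alpha-1)/2}$, plus uniform error control; none of this is in the proposal.

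The paper avoids moments entirely. Using $\partial\bar\partial\log K_\varphi=J_\varphi/K_\varphi$, it proves $J_\varphi\lesssim s^{-4}e^{\varphi}$ by a Cauchy estimate for $fe^{-H_z}$ on $D(z,s(z))$, where $|\varphi-2\operatorname{Re}H_z|\le C$. It proves $K_\varphi\gtrsim s^{-2}e^{\varphi}$ by H\"ormander's estimate with the log-pole weight $\varphi+b_z$. Both steps use only $\varphi_{\zeta\bar\zeta}\asymp s^{-2}$ and $|D^2\varphi|\lesssim s^{-2}$ near $z$.

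If you want to keep your framework, note that only $\int\beta$ is needed. Put $G(t)=\log K_\rho(e^t)$, which is convex and increasing, so $\beta\,dr=\frac12\sqrt{G''(t)}\,dt$. Apply Cauchy--Schwarz on the dyadic intervals $[-2^{-k},-2^{-k-1}]$ and use $G'(-\tau)\le\frac2\tau\bigl(G(-\tau/2)-G(-\tau)\bigr)$. This reduces everything to the two-sided bound $K_\rho\asymp s^{-2}e^{\varphi}$, which the same local arguments give.

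\textbf{Lower bound.} You discarded your first candidate $u=\max\{|z|^2-1,\,M^{-1}\log(|w|/F_\alpha(z))\}$ for a wrong reason. It tends to $0$ at every boundary point: the second entry does so on $\{|w|=F_\alpha\}$, the first at $(\zeta,0)$ with $|\zeta|=1$. So it is a continuous bounded psh exhaustion. Your replacement reasoning is wrong in three places.
\begin{itemize}
\item Near $(\zeta,0)$ one has $\delta\asymp F_\alpha(z)\ll1-|z|^2$, not $1-|z|^2\lesssim\delta$; this is exactly why the index is zero.
\item The inequality $1-|z|^2\ge\exp(-cs^{1/\alpha})$ bounds $-u$ from the wrong side.
\item Your case split is reversed.
\end{itemize}
The correct check uses $F_\alpha-|w|\lesssim\delta$, from the Lipschitz bound on $F_\alpha$.
\begin{itemize}
\item If $|w|<F_\alpha/2$, then $\delta\gtrsim F_\alpha$. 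Hence $\log(1/\delta)\le(\log\frac{e}{1-|z|^2})^\alpha+C$ and $-u\le1-|z|^2\lesssim\exp[-(\log(1/\delta))^{1/\alpha}]$.
\item If $|w|\ge F_\alpha/2$, then $-u\lesssim(F_\alpha-|w|)/F_\alpha\lesssim\delta/F_\alpha$. This is at most $\delta^{1/2}$ when $\delta\le F_\alpha^2$. When $\delta>F_\alpha^2$, instead $-u\le1-|z|^2\le e\exp[-(\tfrac12\log(1/\delta))^{1/\alpha}]$.
\end{itemize}
The paper's choice $u_\alpha=|z|^2+h_\alpha(|w|)-1$, with subadditive $h_\alpha$, gives \eqref{eq:hartogs-alpha-decay} directly, with no case analysis.
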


It remains an interesting question to find the optimal estimates.

\section{A one-sided estimate for Green sublevel sets}\label{sec:Green}

The main result in this section, Proposition \ref{prop:localization}, is a modification of \cite[Proposition 2.5]{C17} and \cite[Lemma 2.2]{CZ22}. The proof is also analogous. We include the details here for the reader's convenience.

Fix a closed Euclidean ball $B$ centered at $z_0$, with
$B\subset\Omega$, and let
\[
\rho(z):=\sup\{v(z):v\in\mathrm{PSH}(\Omega),\ v\le0,\ v|_B\le-1\}
\]
be its relative extremal function. For a closed ball in a bounded hyperconvex domain, this envelope agrees with its upper semicontinuous regularization and is continuous on
$\overline\Omega$ after extension by zero to the boundary; see \cite[Section~1]{C17} and \cite[Proposition~3.1.3(vii)]{B02}. In particular,
\begin{equation}\label{eq:rho-properties}
\rho\in\mathrm{PSH}(\Omega)\cap C(\overline\Omega),\qquad -1\le\rho<0,\qquad \rho|_B=-1,\qquad \rho|_{\partial\Omega}=0.
\end{equation}

We first show that the decay estimate \eqref{eq:assumption} also holds for $\rho$. The function $-u$ is positive and lower semicontinuous. Consequently $m_B:=\inf_B(-u)>0$, and $u/m_B\le-1$ on $B$. It follows that
\[
\rho\ge u/m_B.
\]
By \eqref{eq:assumption}, after choosing $C_B:=\max\{1,C_u/m_B\}$ sufficiently large, we obtain
\begin{equation}\label{eq:rho-decay}
-\rho(z)\le C_Be^{-\eta(\log(1/\delta_\Omega(z)))}
\end{equation}
near $\partial\Omega$, so that
\begin{equation}\label{eq:rho-decay2}
\qquad
\log\left(-\frac1{\rho(z)}\right)\ge\eta\left(\log\frac{1}{\delta_\Omega(z)}\right)-A,\qquad A=\log C_B.
\end{equation}
This comparison does not require $u$ to be continuous. Notice also that $\log(-1/\rho)$ is continuous and nonnegative on $\Omega$, and vanishes on $B$.

\begin{lemma}\label{lem:translation}
For every fixed $r>1$, there is $\varepsilon_r>0$ such that
\begin{equation}\label{eq:translation}
\rho(z_2)
\ge r\rho(z_1)-C_B\exp\left[-\eta\left(\log\frac1{|z_1-z_2|}\right)\right]
\end{equation}
whenever $z_1,z_2\in\Omega$ and
$0<|z_1-z_2|<\varepsilon_r$.
\end{lemma}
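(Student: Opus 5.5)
The plan is to compare $\rho$ with a translate of itself, using the extremal property of $\rho$ together with the boundary decay \eqref{eq:rho-decay}. Put $a:=z_1-z_2$, so that $z_2+a=z_1$. We will build a competitor $v$ for the envelope defining $\rho$ whose value at $z_2$ is essentially $r\rho(z_1)$ minus the boundary term. The inequality $v\le\rho$ then gives \eqref{eq:translation}.

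First, the choice of $\varepsilon_r$. Since $\rho\in C(\overline\Omega)$ is uniformly continuous and $\rho|_B=-1$, there is $\varepsilon_r>0$ with the following properties whenever $|a|<\varepsilon_r$: $B+a\subset\Omega$ and $\rho\le-1/r$ on $B+a$. We also take $\varepsilon_r$ small enough that every $z$ with $\delta_\Omega(z)\le\varepsilon_r$ lies in the region where \eqref{eq:rho-decay} holds. Since $\eta$ is nondecreasing, each $z\in\Omega$ with $\delta_\Omega(z)\le|a|$ then satisfies $\log(1/\delta_\Omega(z))\ge\log(1/|a|)$, and hence
\[
-\rho(z)\le C_Be^{-\eta(\log(1/|a|))}=:M_a .
\]
If $\log(1/|a|)<t_0$ one simply shrinks $\varepsilon_r$, so $\eta$ is defined there.

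Next, fix $M'>M_a$ and define, for $z\in\Omega$,
\[
v(z)=\begin{cases}\max\{r\rho(z+a)-M',\ \rho(z)\}, & z\in\Omega\cap(\Omega-a),\\ \rho(z), & z\in\Omega\setminus(\Omega-a).\end{cases}
\]
We check that $v$ is a competitor in the definition of $\rho$.

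\begin{itemize}
\item \emph{Upper bound.} We have $v\le0$, because both $\rho$ and $r\rho(\cdot+a)-M'$ are negative.
\item \emph{Values on $B$.} On $B$ we have $\rho=-1$, and $z+a\in B+a$ gives $r\rho(z+a)\le-1$. Hence $v\le-1$ on $B$.
\item \emph{Plurisubharmonicity away from the seam.} On the open set $\Omega\cap(\Omega-a)$ the function $v$ is a maximum of two psh functions, hence psh.
\item \emph{Gluing across the seam.} The seam is $\Omega\cap\partial(\Omega-a)$. Take $z$ in it; then $z+a\in\partial\Omega$, so $\delta_\Omega(z)\le|a|$, and therefore $\rho(z)\ge-M_a>-M'$. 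On the other hand, since $\rho$ extends continuously by $0$ to $\partial\Omega$, $r\rho(w+a)-M'\to-M'$ as $w\to z$. By continuity of $\rho$, on a neighbourhood of $z$ in $\Omega\cap(\Omega-a)$ the maximum is attained by $\rho$. So $v=\rho$ near the seam, and $v$ is psh on all of $\Omega$.
\end{itemize}

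It follows that $v\le\rho$, and evaluating at $z_2$ gives $r\rho(z_1)-M'\le\rho(z_2)$. Letting $M'\downarrow M_a$ yields \eqref{eq:translation}.

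The main delicate point is the gluing step. One must make sure the translated function lies strictly below $\rho$ in a full neighbourhood of the seam, not merely at the seam itself. This is why $M'>M_a$ is taken strictly and then sent to $M_a$; the continuity of $\rho$ on $\overline\Omega$ from \eqref{eq:rho-properties} supplies the uniformity. The other point that needs care is that the choice of $\varepsilon_r$ must make $B+a$ land where $\rho\le-1/r$. That in turn uses the uniform continuity of $\rho$ near $B$, which is why $\varepsilon_r$ depends on $r$.
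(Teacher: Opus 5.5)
Your proof is correct and follows the paper's argument. You use the same translated competitor $\max\{\rho,\,r\rho(\cdot+\tau)-E_\tau\}$ on $\Omega\cap(\Omega-\tau)$, glue it to $\rho$ across the seam using \eqref{eq:rho-decay} and the monotonicity of $\eta$, and conclude from the extremal property of $\rho$. The only difference is that you take $M'>M_a$ strictly and then let $M'\downarrow M_a$, which makes $v=\rho$ on a full neighbourhood of the seam. The paper instead uses $E_\tau$ directly with the standard gluing criterion $\limsup_{\zeta\to\xi}(r\rho(\zeta+\tau)-E_\tau)\le\rho(\xi)$, and this non-strict inequality already suffices.
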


\begin{proof}
Choose $0<\varepsilon_r<e^{-t_0}$ so small that \eqref{eq:rho-decay} holds when $\delta_\Omega(z)<\varepsilon_r$. Since $\rho=-1$ on the compact ball $B$ and $\rho$ is continuous, we may decrease $\varepsilon_r$ so that the closed $\varepsilon_r$-neighborhood of $B$ lies in $\Omega$ and
\begin{equation}\label{eq:ball-neighborhood}
\rho(\zeta)\le-1/r
\end{equation}
for all $\zeta$ with $\mathrm{dist}(\zeta,B)\le\varepsilon_r$. For $0<|\tau|<\varepsilon_r$, put
\[
\Omega_\tau:=\{\zeta\in\Omega:\zeta+\tau\in\Omega\},\qquad E_\tau:=C_B\exp\left[-\eta\left(\log\frac1{|\tau|}\right)\right],
\]
and define
\begin{equation}\label{eq:translated-competitor}
v_\tau(\zeta)=\begin{cases}
\max\{\rho(\zeta),r\rho(\zeta+\tau)-E_\tau\},\quad &\zeta\in\Omega_\tau,\\
\rho(\zeta),\quad &\zeta\in\Omega\setminus\Omega_\tau.
\end{cases}
\end{equation}
Clearly $v_\tau<0$. We first show that $v_\tau$ is a plurisubharmonic function on $\Omega$. If $\xi\in\partial\Omega_\tau\cap\Omega$, then $\xi+\tau\in\partial\Omega$, and hence $\delta_\Omega(\xi)\le|\tau|<\varepsilon_r$.
By \eqref{eq:rho-decay} and monotonicity of $\eta$,
\[
-\rho(\xi)
\le C_B\exp\left[-\eta\left(\log\frac1{\delta_\Omega(\xi)}\right)\right]
\le E_\tau.
\]
Moreover, $\rho(\zeta+\tau)\to0$ as $\zeta\to\xi$ within $\Omega_\tau$. Therefore
\[
\limsup_{\substack{\zeta\to\xi\\\zeta\in\Omega_\tau}}\bigl(r\rho(\zeta+\tau)-E_\tau\bigr)
=-E_\tau\le\rho(\xi),
\]
so that $v_\tau\in\mathrm{PSH}(\Omega)$.

For $\zeta\in B$, the translated point $\zeta+\tau$ lies in the $\varepsilon_r$-neighborhood of $B$. Hence
\[
\rho(\zeta)=-1, \qquad r\rho(\zeta+\tau)-E_\tau\le-1-E_\tau\le-1.
\]
Thus $v_\tau=-1$ on $B$, and the extremal property gives $v_\tau\le\rho$ throughout $\Omega$. Finally, take $\tau=z_1-z_2$. Then $z_2\in\Omega_\tau$ and $z_2+\tau=z_1$, whence
\[
\rho(z_2)\ge v_\tau(z_2)\ge r\rho(z_1)-E_\tau,
\]
which proves \eqref{eq:translation}.
\end{proof}

Let $g_\Omega(\cdot,z)$ be the negative pluricomplex Green function with pole at $z$, defined by
\[
g_\Omega(\zeta,z)=\sup\{v(\zeta):v\in\mathrm{PSH}(\Omega),\ v\le0,\ v(\xi)\le\log|\xi-z|+O(1)\text{ near }z\}.
\]
Set $A_\Omega(z,-1)=\{\zeta\in\Omega:g_\Omega(\zeta,z)<-1\}$.

\begin{proposition}\label{prop:localization}
There exist $a>0$ and $c_0\in(0,1)$ such that
\begin{equation}\label{eq:one-sided}
A_\Omega(z,-1)\subset\{\zeta\in\Omega:-\rho(\zeta)>\mu(z)\},
\end{equation}
where
\begin{equation}\label{eq:mu}
\mu(z):=\frac{-c_0\rho(z)}{1+\vartheta(\log(-1/\rho(z))+a)}, \quad z\in\Omega.
\end{equation}
\end{proposition}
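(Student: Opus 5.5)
The plan follows the proof of \cite[Proposition 2.5]{C17}: build a competitor for the pluricomplex Green function from $\rho$, and use Lemma~\ref{lem:translation} to control $\rho$ at points close to $z$.

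\emph{Step 1: a competitor for $g_\Omega(\cdot,z)$.} Fix $z$ near $\partial\Omega$ (for $z$ in a compact set the claim follows from continuity of $\rho$ and standard estimates, after shrinking $c_0$) and write $\lambda:=-\rho(z)$. Choose a radius $\varepsilon=\varepsilon(z)$, to be fixed in Step~3, with $\varepsilon<\varepsilon_r$ for a fixed $r$, say $r=2$. Consider
\[
v(\zeta)=\max\Bigl\{\log\frac{|\zeta-z|}{\varepsilon},\ \frac{M\rho(\zeta)}{\lambda}\Bigr\}
\]
for a constant $M$ to be chosen. For $|\zeta-z|\ge\varepsilon$ the first term is $\ge0$ and useless, so we instead take
\[
v=\max\bigl\{\log(|\zeta-z|/\varepsilon)-K,\ M\rho/\lambda\bigr\}
\]
near $z$ and $v=M\rho/\lambda$ elsewhere. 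This gluing is legitimate provided $M\rho(\zeta)/\lambda\ge\log(|\zeta-z|/\varepsilon)-K$ on the sphere $|\zeta-z|=\varepsilon$. Then $v\le0$, $v$ is plurisubharmonic with a logarithmic pole at $z$, so $v\le g_\Omega(\cdot,z)$.

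Consequently, on $A_\Omega(z,-1)$ we get $M\rho/\lambda<-1$, i.e. $-\rho>\lambda/M$, provided the max is realised by the $\rho$-term there. More cleanly, choose $K$ so that the log term is $\le-1$ on the whole ball, i.e. $K=1$. Then $g<-1$ forces $M\rho/\lambda<-1$, so $-\rho(\zeta)>\lambda/M$. We therefore need $M\simeq 1+\vartheta(\log(1/\lambda)+a)$ up to a constant $1/c_0$.

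\emph{Step 2: the gluing condition.} On $|\zeta-z|=\varepsilon$, Lemma~\ref{lem:translation} with $z_1=z$, $z_2=\zeta$ gives
\[
\rho(\zeta)\ge-2\lambda-C_Be^{-\eta(\log(1/\varepsilon))}.
\]
Choosing $\varepsilon$ with $C_Be^{-\eta(\log 1/\varepsilon)}\le\lambda$ yields $\rho(\zeta)\ge-3\lambda$. It then suffices that $-3M\ge-1$, which is false, so the sphere must be taken larger and the log term used with a big gap. Take instead the outer sphere $|\zeta-z|=\varepsilon$ and the pole term $\log(|\zeta-z|/\varepsilon)$, with $\rho/\lambda$ scaled by $1/3$: set
\[
v=\max\{\log(|\zeta-z|/\varepsilon),\ \rho/(3\lambda)\}
\]
inside the ball, and $\rho/(3\lambda)$ outside. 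The gluing holds since $\rho/(3\lambda)\ge-1$ on the sphere. Now $v\le g$, and $v\ge\log(|\zeta-z|/\varepsilon)$ shows $A_\Omega(z,-1)$ meets the ball only where $|\zeta-z|<\varepsilon/e$. Outside the ball, $g<-1$ gives $-\rho>3\lambda$. So the difficulty is entirely the points of the small ball $|\zeta-z|<\varepsilon/e$.

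\emph{Step 3: the small ball (the main obstacle).} For such $\zeta$ the lower bound on $-\rho(\zeta)$ must come from the one-sided direction: apply Lemma~\ref{lem:translation} with roles swapped, $z_2=z$, $z_1=\zeta$. This gives
\[
\lambda\le 2(-\rho(\zeta))+C_Be^{-\eta(\log 1/\varepsilon)}.
\]
With $\varepsilon$ chosen so that the error is $\le\lambda/2$, we get $-\rho(\zeta)\ge\lambda/4$, which is even better than $\mu$. So the real cost is the existence of such an $\varepsilon<\varepsilon_r$. We need $\eta(\log 1/\varepsilon)\ge\log(1/\lambda)+\log(2C_B)$, i.e. $\log(1/\varepsilon)$ slightly larger than $\vartheta(\log(1/\lambda)+a)$, using the definition \eqref{eq:inverse} and $\eta\to\infty$.

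Where the factor $1/(1+\vartheta)$ enters is the Green function normalisation. A pole of the form $\log(|\zeta-z|/\varepsilon)$ with $\varepsilon$ tiny cannot sit below $g$ globally unless $\Omega$ is contained in the ball, so the log term must be taken relative to $\mathrm{diam}\,\Omega$. The competitor becomes
\[
\max\Bigl\{\frac{\log(|\zeta-z|/D)}{\log(D/\varepsilon)},\ \frac{\rho}{3\lambda}\Bigr\}\cdot(\text{const}),
\]
which is a valid competitor only after dividing by $\log(D/\varepsilon)\approx1+\vartheta(\log(1/\lambda)+a)$. Equivalently, $\max\{\log(|\zeta-z|/D),\,L\rho/(3\lambda)\}$ with $L=\log(D/\varepsilon)$ glues on the sphere $|\zeta-z|=\varepsilon$. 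Then $g<-1$ outside the ball forces $-\rho>3\lambda/L$, which is exactly $\mu(z)$ with $c_0\sim3/(1+\log D)$. Inside the ball, Step~3 gives $-\rho\ge\lambda/4\ge\mu$.

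I expect the bookkeeping of $\varepsilon$ versus $\varepsilon_r$ and $\vartheta$ to be the delicate point: ensuring $\log(1/\varepsilon)\le C(1+\vartheta(\log(1/\lambda)+a))$ by taking $\log(1/\varepsilon)$ just above $\vartheta(\cdot)$, with $a$ absorbing $\log(2C_B)$ and the threshold $t_0$.
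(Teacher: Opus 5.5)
\textbf{The competitor has no pole at $z$.} Every competitor you write down has the same defect, including the final $\max\{\log(|\zeta-z|/D),\,L\rho/(3\lambda)\}$ glued on $|\zeta-z|=\varepsilon$. You take the maximum with the $\rho$-term on the ball around $z$ and the pure $\rho$-term outside. Near $z$ the $\rho$-term is close to $L\rho(z)/(3\lambda)=-L/3$, so your $v$ stays bounded as $\zeta\to z$. Hence $v$ has no logarithmic pole and is not admissible in the definition of $g_\Omega(\cdot,z)$. In fact $v\le g_\Omega(\cdot,z)$ is false, since $g_\Omega(\zeta,z)\to-\infty$ as $\zeta\to z$. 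For the same reason, the gluing inequality you check in Step~2 belongs to the wrong arrangement. That inequality is $\rho\ge-3\lambda$ on the sphere, an upper bound for $-\rho$ obtained from Lemma~\ref{lem:translation} with $z_1=z$, $z_2=\zeta$.

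\textbf{The repair uses your own Step~3 estimate.} Let $L=\log(D/\varepsilon)$ with $D\ge\max\{1,\operatorname{diam}\Omega\}$.
Put the pole term $\log(|\zeta-z|/D)$ alone on $\Omega\cap\overline{B(z,\varepsilon)}$. Put $\max\{\log(|\zeta-z|/D),\,4L\rho/\lambda\}$ on $\Omega\setminus\overline{B(z,\varepsilon)}$.

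\emph{Gluing.} The condition on the sphere is now $4L\rho/\lambda\le\log(\varepsilon/D)=-L$, that is, $-\rho\ge\lambda/4$. This lower bound comes from Lemma~\ref{lem:translation} with $z_1=\zeta$, $z_2=z$, $r=2$, once $C_Be^{-\eta(\log(1/\varepsilon))}\le\lambda/2$. Your choice $\log(1/\varepsilon)=\vartheta(\log(1/\lambda)+a)+1$ achieves this provided $a\ge\log(2C_B)$. It also gives $\varepsilon<\varepsilon_2$ provided $a\ge\eta(\log(1/\varepsilon_2))$.

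\emph{Conclusion.} The glued function is plurisubharmonic, nonpositive, and equals $\log(|\zeta-z|/D)$ near $z$, so it lies below $g_\Omega(\cdot,z)$. On $A_\Omega(z,-1)\setminus\overline{B(z,\varepsilon)}$ you therefore get $-\rho>\lambda/(4L)$. Inside the ball, $-\rho\ge\lambda/4$ by the same inequality, using monotonicity of $\eta$. Since $L\le(1+\log D)\bigl(1+\vartheta(\log(1/\lambda)+a)\bigr)$, this gives the claim with $c_0=[4(1+\log D)]^{-1}$. Your reduction to $z$ near $\partial\Omega$ is unnecessary, because $\lambda\le1$ throughout.

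\textbf{Comparison with the paper.} Once corrected, your argument is the paper's proof with a different interface. The paper glues along the level set $\{\rho=\rho(z)/2\}$ instead of a sphere. The inner region then automatically satisfies $-\rho\ge\lambda/2$. The only input needed is the lower distance bound $\log(1/|\zeta-z|)\le\vartheta(\log(1/\lambda)+a)$ for $\zeta$ on that level set. That bound is the same translation inequality, used with $r=3/2$ and read contrapositively.
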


\begin{proof}
Choose $0<r_0<e^{-t_0}$ small enough for Lemma~\ref{lem:translation} to apply with $r=3/2$, and so that \eqref{eq:rho-decay} holds when $\delta_\Omega(z)<r_0$. Fix
\begin{equation}\label{eq:a-choice}
a\ge\max\{\log(4C_B),\eta(\log(1/r_0)),1\}.
\end{equation}
We first show that
\begin{equation}\label{eq:interface-distance}
\rho(\zeta)=\frac12\rho(z)\quad\Longrightarrow\quad
\log\frac1{|\zeta-z|}
\le \vartheta\left(\log\left(-\frac{1}{\rho(z)}\right)+a\right).
\end{equation}
If $|\zeta-z|<r_0$, use \eqref{eq:translation} with $z_1=\zeta$, $z_2=z$ and $r=3/2$. Since $\frac32\rho(\zeta)=\frac34\rho(z)$, we get
\[
-\frac{\rho(z)}{4}
\le C_B\exp\left[-\eta\left(\log\frac1{|\zeta-z|}\right)\right].
\]
Taking logarithms yields
\[
\eta\left(\log\frac1{|\zeta-z|}\right)
\le\log\left(-\frac{1}{\rho(z)}\right)+\log(4C_B)
\le\log\left(-\frac{1}{\rho(z)}\right)+a,
\]
which proves \eqref{eq:interface-distance} in this case as $\log(1/|\zeta-z|)>t_0$. If $|\zeta-z|\ge r_0$, then
\[
\log\frac1{|\zeta-z|}\le\log\frac1{r_0}
\le \vartheta\left(\log\left(-\frac{1}{\rho(z)}\right)+a\right),
\]
because $\eta(\log(1/r_0))\le a$ and $|\rho(z)|\le1$. Thus \eqref{eq:interface-distance} holds for every $z\in\Omega$.

Fix $R>1+\mathrm{diam}(\Omega)$, and for the fixed pole $z$ write
\[
L_z:=1+\log R+\vartheta\left(\log\left(-\frac{1}{\rho(z)}\right)+a\right).
\]
Define
\begin{equation}\label{eq:green-competitor}
\psi_z(\zeta)=\begin{cases}
\log\dfrac{|\zeta-z|}{R},\quad &\rho(\zeta)\le\rho(z)/2,\\[2pt]
\max\left\{\log\dfrac{|\zeta-z|}{R},\dfrac{2L_z\rho(\zeta)}{-\rho(z)}\right\},\quad &\rho(\zeta)>\rho(z)/2.
\end{cases}
\end{equation}
On the interface $\rho(\zeta)=\rho(z)/2$, \eqref{eq:interface-distance} gives
\[
\log\frac{|\zeta-z|}{R}\ge1-L_z,\qquad \frac{2L_z\rho(\zeta)}{|\rho(z)|}=-L_z.
\]
It follows that $\psi_z\in\mathrm{PSH}(\Omega)$. Moreover, since $\psi_z<0$ and $\psi_z(\zeta)=\log|\zeta-z|-\log R$ when $\zeta$ lies in a neighborhood of $z$, we infer from the definition of the pluricomplex Green function that
\begin{equation}\label{eq:green-comparison}
g_\Omega(\zeta,z)\ge\psi_z(\zeta),\qquad \zeta\in\Omega.
\end{equation}

Now let $\zeta\in A_\Omega(z,-1)$. If $\rho(\zeta)\le\rho(z)/2$, then $-\rho(\zeta)\ge-\rho(z)/2>-\rho(z)/(2L_z)$, because $L_z>1$. In the other case, \eqref{eq:green-comparison} gives
\[
-1
>g_\Omega(\zeta,z)
\ge\psi_z(\zeta)
\ge\frac{2L_z\rho(\zeta)}{-\rho(z)},
\]
which again implies $-\rho(\zeta)>-\rho(z)/(2L_z)$. Finally,
\[
L_z\le(1+\log R)\bigl(1+\vartheta(\log(1/|\rho(z)|)+a)\bigr).
\]
Taking $c_0=[2(1+\log R)]^{-1}$, we obtain
\[
-\rho(\zeta)
>\frac{-\rho(z)}{2L_z}
\ge\mu(z),
\]
as asserted.
\end{proof}

Only the one-sided inclusion \eqref{eq:one-sided} will be used. In particular, the argument does not require symmetry of the pluricomplex Green function or disjointness of sublevel sets with different poles.

\section{Boundary mass and interior capacity}\label{sec:MA}
For $z\in\Omega$, put
\[
k_z(\zeta)=\frac{K_\Omega(\zeta,z)}{\sqrt{K_\Omega(z)}}.
\]
The reproducing property gives $\|k_z\|_{L^2(\Omega)}=1$. We shall estimate its mass on $A_\Omega(w,-1)$ by separating a boundary layer from the region where $-\rho$ is bounded below. First, write
\[
\Omega_a:=\{\zeta\in\Omega:-\rho(\zeta)>a\}.
\]
The following lemma is proved in \cite[Proposition 2.1]{C17} (see also \cite[Proposition 6.1]{C16}).

\begin{lemma}\label{lem:boundary-mass-0}
For every $0<\beta<1$, there are constants $C_\beta,\varepsilon_\beta>0$ such that
\begin{equation}\label{eq:boundary-mass-0}
\int_{\{-\rho\le\varepsilon\}}|K_\Omega(\cdot,z)|^2\,\mathrm{d}V
\le C_\beta K_{\Omega_a}(z)\left(\frac{\varepsilon}{a}\right)^\beta,\qquad 0<\varepsilon\le\varepsilon_\beta a,
\end{equation}
for all $z\in\Omega_a$.
\end{lemma}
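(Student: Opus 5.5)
The estimate \eqref{eq:boundary-mass-0} is quoted from \cite[Proposition 2.1]{C17}. I would reprove it by duality: control the boundary mass of $K_\Omega(\cdot,z)$ through a $\bar\partial$-problem, and use a twisted (Donnelly--Fefferman/Berndtsson type) $L^2$ estimate whose weight is a power of $-\rho$. The power gives the factor $(\varepsilon/a)^\beta$.

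\textbf{Setup.} Put $\psi=-\log(-\rho)$. Since $\rho$ is plurisubharmonic and negative, $\psi$ is plurisubharmonic and satisfies
\[
i\partial\psi\wedge\bar\partial\psi\le i\partial\bar\partial\psi .
\]
This is exactly the self-bounded gradient hypothesis of Berndtsson's theorem. If $\rho$ is not smooth, I would first approximate it by smooth plurisubharmonic exhaustions on smooth sublevel domains and pass to the limit at the end.

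\textbf{Cut-off and reproducing identity.} Fix $0<\varepsilon\le\varepsilon_\beta a$, with $\varepsilon_\beta$ small enough that $2\varepsilon<a$. Let $\chi\in C^\infty(\mathbb R)$ satisfy $\chi=1$ on $(-\infty,1]$, $\chi=0$ on $[2,\infty)$, and put $\chi_\varepsilon=\chi(-\rho/\varepsilon)$. Write $K=K_\Omega(\cdot,z)$ and
\[
I(\varepsilon)=\int\chi_\varepsilon|K|^2\,\mathrm dV .
\]
By the reproducing property, $I(\varepsilon)=P(\chi_\varepsilon K)(z)$. Let $u$ be the $L^2$-minimal solution of $\bar\partial u=K\,\bar\partial\chi_\varepsilon$, so that $P(\chi_\varepsilon K)=\chi_\varepsilon K-u$. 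Since $\chi_\varepsilon(z)=0$ for $z\in\Omega_a$, this gives $I(\varepsilon)=-u(z)$. Moreover $\bar\partial u=0$ on $\Omega_a\subset\{-\rho>2\varepsilon\}$. Hence, by the extremal characterization of $K_{\Omega_a}$,
\[
|u(z)|^2\le K_{\Omega_a}(z)\int_{\Omega_a}|u|^2\,\mathrm dV .
\]

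\textbf{Weighted estimate.} The key step is to get, for some $\delta\in(\beta,1)$,
\begin{equation}\label{eq:plan-weighted}
\int_\Omega|u|^2(-\rho)^{\delta}\,\mathrm dV\le C_\delta\int_\Omega|K\bar\partial\chi_\varepsilon|^2_{i\partial\bar\partial\psi}(-\rho)^{\delta}\,\mathrm dV .
\end{equation}
Granting \eqref{eq:plan-weighted}, the two sides are estimated as follows.
\begin{itemize}
\item On $\operatorname{supp}\bar\partial\chi_\varepsilon$ we have $\varepsilon\le-\rho\le2\varepsilon$ and $|\bar\partial\rho|^2_{i\partial\bar\partial\psi}\le\rho^2$. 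Hence $|\bar\partial\chi_\varepsilon|^2_{i\partial\bar\partial\psi}\lesssim1$ there, and the right side of \eqref{eq:plan-weighted} is at most $C\varepsilon^\delta\int_{\{-\rho\le2\varepsilon\}}|K|^2$.
\item The left side is at least $a^\delta\int_{\Omega_a}|u|^2$.
\end{itemize}
Combining these with the bound on $|u(z)|^2$ yields
\[
I(\varepsilon)^2\le C\,K_{\Omega_a}(z)\,(\varepsilon/a)^\delta\,I(4\varepsilon)
\]
(after enlarging the cut-off region by a factor of $2$). Next use the crude bound $I(4\varepsilon)\le K_\Omega(z)$ and compare $K_\Omega(z)\le K_{\Omega_a}(z)$. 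Alternatively, iterate the recursion finitely many times, using $I\le K_{\Omega_a}(z)$ at the last step. Either route gives $I(\varepsilon)\le C_\beta K_{\Omega_a}(z)(\varepsilon/a)^{\beta}$ for any prescribed $\beta<\delta$, which is \eqref{eq:boundary-mass-0} since $\{-\rho\le\varepsilon\}$ has $\chi_\varepsilon=1$.

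\textbf{Main obstacle.} The hard step is \eqref{eq:plan-weighted}. The twisted estimate is naturally proved for the minimal solution in $L^2(e^{-\phi})$ with $\phi=\delta\psi$. However, the reproducing identity $I(\varepsilon)=-u(z)$ needs the unweighted minimal solution. The weights $(-\rho)^{\pm\delta}$ also pull in opposite directions: the decreasing weight $(-\rho)^\delta$ is the one that gives the gain on $\Omega_a$.

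I would resolve this with Berndtsson's form of the estimate for the $L^2$-minimal solution, which gives
\[
\int|u|^2e^{\delta\psi}\le C_\delta\int|\bar\partial u|^2_{i\partial\bar\partial\psi}e^{\delta\psi},\qquad 0<\delta<1.
\]
I would combine it with a duality argument: write
\[
I(\varepsilon)^{1/2}=\sup\{|(\chi_\varepsilon K,h)|:h\in A^2(\Omega),\ \|h\|\le1\}.
\]
The aim is to arrange the weights so that the factor $\varepsilon^\delta$ falls on the boundary layer and $a^{-\delta}$ falls on $\Omega_a$. If a single application loses a square root, I would iterate over dyadic layers $\{2^{j}\varepsilon<-\rho\le2^{j+1}\varepsilon\}$ and sum the resulting geometric series. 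Doing so reduces the exponent from $\delta$ to any $\beta<1$, at the cost of the constant $C_\beta$.
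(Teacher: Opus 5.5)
The paper does not prove this lemma; it quotes \cite[Proposition~2.1]{C17}. So your argument has to stand on its own, and it has a genuine gap at exactly the step you flag as the main obstacle.

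\textbf{Where it breaks.} The reduction is sound: $I(\varepsilon)=-u(z)$ for the unweighted minimal solution, $|u(z)|^2\le K_{\Omega_a}(z)\int_{\Omega_a}|u|^2$, and $|\bar\partial\chi_\varepsilon|_{i\partial\bar\partial\psi}\lesssim1$ on the layer. But everything then hinges on
\[
\int_\Omega|u|^2(-\rho)^{\delta}\,\mathrm{d}V\le C_\delta\int_\Omega|\bar\partial u|^2_{i\partial\bar\partial\psi}(-\rho)^{\delta}\,\mathrm{d}V
\]
for the \emph{unweighted} minimal solution, and nothing you invoke proves it.

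The Berndtsson estimate you display has the opposite weight. Since $e^{\delta\psi}=(-\rho)^{-\delta}$ blows up at $\partial\Omega$, inserting it into your scheme gives only $\int_{\Omega_a}|u|^2\lesssim\varepsilon^{-\delta}\int_{\{-\rho\le2\varepsilon\}}|K|^2$. That is a loss, not the gain $(\varepsilon/a)^\delta$. The twist cannot be reversed, because Berndtsson's proof applies H\"ormander's estimate with the weight $\phi+\delta\psi$, which must be plurisubharmonic. H\"ormander's estimate with weight $\delta\psi$ does produce the factor $(-\rho)^\delta$, but only for the minimal solution in $L^2((-\rho)^\delta)$, and for that solution $I(\varepsilon)=-u(z)$ fails.

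Your duality formula is also wrong. The supremum of $|(\chi_\varepsilon K,h)|$ over the unit ball of $A^2(\Omega)$ equals $\|P(\chi_\varepsilon K)\|$, not $I(\varepsilon)^{1/2}$. The dyadic-layer sentence is not an argument. Finally, even granting the key inequality, the crude route $I(4\varepsilon)\le K_{\Omega_a}(z)$ yields only the exponent $\delta/2<1/2$. Only the iteration reaches every $\beta<1$.

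\textbf{The missing idea.} What you need is the Berndtsson--Charpentier boundedness of the Bergman projection $P$ on $L^2((-\rho)^{-r})$ for $0<r<1$ (after the usual regularization of $\rho$).
\begin{itemize}
\item Given $f$ in this space, let $H$ be the orthogonal projection of $fe^{r\psi}$ onto $A^2(\Omega,e^{-r\psi})$. This space contains $A^2(\Omega)$, since $e^{-r\psi}=(-\rho)^r\le1$.
\item Hence $(He^{-r\psi},g)=(f,g)$ for all $g\in A^2(\Omega)$, so $Pf=P(He^{-r\psi})=He^{-r\psi}-v$. Here $v$ is the $L^2$-minimal solution of $\bar\partial v=-rHe^{-r\psi}\bar\partial\psi$.
\item Berndtsson's estimate with its correct twist $e^{r\psi}$, together with $|\bar\partial\psi|_{i\partial\bar\partial\psi}\le1$, gives $\|Pf\|_{L^2((-\rho)^{-r})}\le C_r\|H\|_{L^2(e^{-r\psi})}\le C_r\|f\|_{L^2((-\rho)^{-r})}$.
\end{itemize}
Apply this to $f=\mathbf 1_{\Omega_a}K_{\Omega_a}(\cdot,z)$. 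The reproducing property of $K_{\Omega_a}$ gives $Pf=K_\Omega(\cdot,z)$, hence
\[
\int_\Omega|K_\Omega(\cdot,z)|^2(-\rho)^{-r}\,\mathrm{d}V\le C_r^2\,a^{-r}K_{\Omega_a}(z).
\]
Chebyshev's inequality on $\{-\rho\le\varepsilon\}$ then yields the lemma with $\beta=r$, for every $\varepsilon>0$ and with no cut-off at all.

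Your own scheme can also be repaired. $P$ is self-adjoint for the unweighted pairing, so by duality it is bounded on $L^2((-\rho)^{\delta})$. Since $u=-P(\chi_\varepsilon K)$ on $\Omega_a$, this gives
\[
\int_{\Omega_a}|u|^2\le Ca^{-\delta}\int\chi_\varepsilon^2|K|^2(-\rho)^\delta\le C(2\varepsilon/a)^\delta I(2\varepsilon).
\]
That is exactly your recursion, and your iteration then closes the argument.
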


Take $a=\mu(z)$, so that Proposition \ref{prop:localization} gives $A_\Omega(z,-1)\subset\Omega_a$. Then
\[
\int_{\{-\rho\le\varepsilon\}}|k_z|^2\,\mathrm{d}V
\le C_\beta \frac{K_{A_\Omega(z,-1)}(z)}{K_\Omega(z)}\left(\frac{\varepsilon}{\mu(z)}\right)^\beta,\qquad 0<\varepsilon\le\varepsilon_\beta\mu(z).
\]
Next, we will make use of the following lemma (cf. \cite{C99,Herbort99}).

\begin{lemma}\label{lem:localization1}
There exists a constant $L_n>0$ such that
\[
K_\Omega(z)\leq K_{A_\Omega(z,-1)}(z)\leq L_n K_\Omega(z),\qquad \forall\,z\in\Omega.
\]
\end{lemma}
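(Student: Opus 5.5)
The lower bound $K_\Omega(z)\le K_{A_\Omega(z,-1)}(z)$ is immediate from monotonicity of the Bergman kernel under inclusion, since $A_\Omega(z,-1)\subset\Omega$ and $z\in A_\Omega(z,-1)$. The content is therefore the upper bound. I would prove it by the standard $L^2$-extension argument: given $f\in A^2(A_\Omega(z,-1))$ with $|f(z)|^2=K_{A_\Omega(z,-1)}(z)\|f\|^2$, construct $F\in A^2(\Omega)$ with $F(z)=f(z)$ and $\|F\|_{L^2(\Omega)}^2\le L_n\|f\|^2_{L^2(A_\Omega(z,-1))}$. Then
\[
K_\Omega(z)\ge |F(z)|^2/\|F\|^2\ge K_{A_\Omega(z,-1)}(z)/L_n.
\]

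For the construction, write $g=g_\Omega(\cdot,z)$. Fix a smooth cutoff $\chi$ on $\mathbb{R}$ with $\chi=1$ on $(-\infty,-2]$, $\chi=0$ on $[-1,\infty)$ and $|\chi'|\le 2$. Set $F=\chi(g)f-v$, where $v$ solves
\[
\bar\partial v=f\,\chi'(g)\,\bar\partial g=:h,
\]
which is supported in $\{-2\le g\le-1\}\subset A_\Omega(z,-1)$. I would solve this using the Donnelly--Fefferman/Berndtsson variant of H\"ormander's estimate with plurisubharmonic weights:
\[
\varphi=2n\,g\quad\text{and}\quad \psi=-\log(-g).
\]
The weight $e^{-\varphi}=|\zeta-z|^{-2n}$ near $z$ is non-integrable, which forces $v(z)=0$, so $F(z)=f(z)$. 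The function $\psi$ satisfies $i\partial\bar\partial\psi\ge i\partial\psi\wedge\bar\partial\psi$. Since $g$ is only continuous on a hyperconvex domain (Demailly), I would first regularize: approximate by smooth plurisubharmonic functions, or solve on an exhaustion by smooth strictly pseudoconvex domains and pass to a weak limit. This yields
\[
\int_\Omega |v|^2e^{-\varphi}\le C\int |h|^2_{i\partial\bar\partial\psi}e^{-\varphi}.
\]
Because $|\bar\partial g|^2_{i\partial\bar\partial\psi}\le g^2\le 4$ on the support of $h$, and $e^{-\varphi}\le e^{4n}$ there, the right side is at most $C_n\int_{A_\Omega(z,-1)}|f|^2$. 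Since $e^{-\varphi}\ge1$ everywhere, this gives $\|v\|^2_{L^2(\Omega)}\le C_n\|f\|^2$, and hence $\|F\|^2\le L_n\|f\|^2$ with $L_n$ depending only on $n$.

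The main obstacle is making the twisted estimate rigorous when $g$ is merely continuous with a logarithmic pole. I would handle it by replacing $g$ with $\max\{g,\log|\zeta-z|-N\}$-type regularizations and smoothing them, working on strictly pseudoconvex subdomains, and then letting the parameters tend to their limits. Uniform constants are what make the weak limit legitimate.
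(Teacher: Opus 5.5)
Your argument is correct and is essentially the standard proof from the references the paper cites for this lemma (Chen; Herbort), which the paper itself does not reproduce: you apply the Donnelly--Fefferman/Berndtsson estimate with weights $2ng$ and $-\log(-g)$; the non-integrability of $e^{-2ng}$ at $z$ forces $v(z)=0$, and $|\bar\partial g|^2_{i\partial\bar\partial\psi}\le g^2$ yields a constant depending only on $n$. One small fix is needed: take $\chi=0$ on $[-1-\varepsilon,\infty)$ rather than on $[-1,\infty)$, since points with $g=-1$ lie outside $A_\Omega(z,-1)$ and $f$ may blow up there; this way $\chi(g)f$ has compact support in $A_\Omega(z,-1)$, where $f$ is bounded, and $\bar\partial(\chi(g)f)$ is a genuine $\bar\partial$-closed $L^2$ form on $\Omega$.
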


Consequently,
\begin{equation}\label{eq:boundary-mass}
\int_{\{-\rho\le\varepsilon\}}|k_z|^2\,\mathrm{d}V
\le C_*\left(\frac{\varepsilon}{\mu(z)}\right)^\beta,\qquad 0<\varepsilon\le\varepsilon_\beta\mu(z),
\end{equation}
for all $z\in\Omega$, where $C_*=C_\beta L_n$.

In what follows, we normalize $dd^c=(i/\pi)\partial\bar\partial$, so that
\[
\bigl(dd^cg_\Omega(\cdot,w)\bigr)^n=\delta_w,
\]
where $\delta_w$ denotes the Dirac measure. For a Borel set $E\subset\Omega$, let
\[
\mathrm{Cap}_\Omega(E)
=\sup\left\{\int_E(dd^c\psi)^n:\psi\in\mathrm{PSH}(\Omega),\ -1\le\psi\le0\right\}.
\]
Here, all Monge--Amp\`ere measures of locally bounded functions are understood in the Bedford--Taylor sense.

\begin{lemma}\label{lem:capacity}
For $w\in\Omega$ and $\varepsilon>0$,
\begin{equation}\label{eq:capacity}
\mathrm{Cap}_\Omega\bigl(A_\Omega(w,-1)\cap\{-\rho>\varepsilon\}\bigr)
\le\frac{n!}{\varepsilon}\bigl(-\rho(w)\bigr).
\end{equation}
This estimate does not use \eqref{eq:assumption}.
\end{lemma}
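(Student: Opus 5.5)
The plan is to test the capacity against a single bounded plurisubharmonic function with zero boundary values built from the Green function. This reduces the estimate to a weighted Monge--Amp\`ere inequality of Błocki/Cegrell type, and then to a Lelong--Jensen type bound.

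\textbf{Step 1 (setup).} Put $h:=\max\{g_\Omega(\cdot,w),-1\}$. Then $h\in\mathrm{PSH}(\Omega)$, $-1\le h\le 0$, $h\to0$ at $\partial\Omega$ (by hyperconvexity), and $h=-1$ on $A_\Omega(w,-1)$. Write $E=A_\Omega(w,-1)\cap\{-\rho>\varepsilon\}$. On $E$ we have $(-h)^n=1$ and $-\rho/\varepsilon>1$. Hence, for any admissible $\psi$ with $-1\le\psi\le0$,
\[
\int_E(dd^c\psi)^n\le\frac1\varepsilon\int_\Omega(-\rho)(-h)^n(dd^c\psi)^n .
\]
By a standard reduction we may assume $\psi$ has zero boundary values: replace $\psi$ by $\max\{\psi,Mh\}$ for large $M$, which agrees with $\psi$ on a neighbourhood of $\overline E$ away from $\partial\Omega$, combined with an exhaustion by compact subsets. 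We may also assume $\psi$ is smooth, by decreasing approximation and Bedford--Taylor continuity.

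\textbf{Step 2 (weighted integration by parts).} I would prove
\[
\int_\Omega(-\rho)(-h)^n(dd^c\psi)^n\le n!\int_\Omega(-\rho)(dd^c h)^n .
\]
This is done by replacing one factor $dd^c\psi$ by $dd^c h$ at a time, in the spirit of Błocki's inequality $\int(-u)^n(dd^cv)^n\le n!\|u\|_\infty^{n-1}\int(-v)(dd^cu)^n$. At the $k$-th step, integrate by parts $\int(-\rho)(-h)^k\,dd^c\psi\wedge T$, where $T$ is the remaining closed positive current; all boundary terms vanish since $h,\psi\to0$. The term where $dd^c$ falls on $(-h)^k$ gives, via $dd^c(-h)^k\ge -k(-h)^{k-1}dd^ch$ plus a gradient term, a factor $k$ and lowers the power of $-h$ by one. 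The term where it falls on $-\rho$ has the right sign because $\rho$ is plurisubharmonic and $-\psi\le1$. The cross gradient terms are handled by the Cauchy--Schwarz argument from Błocki's proof, together with $|\psi|\le1$. Iterating from $k=n$ down to $0$ produces the constant $n!$.

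\textbf{Step 3 (Lelong--Jensen).} Since $(dd^c h)^n$ is the Demailly--Lelong measure $\mu_{w,-1}$ carried by $\{g_\Omega(\cdot,w)=-1\}$, of total mass $1$, and $\rho$ is plurisubharmonic and continuous, the Lelong--Jensen formula gives
\[
\int(-\rho)(dd^ch)^n=-\int\rho\,d\mu_{w,-1}\le-\rho(w).
\]
Combining the three steps and taking the supremum over $\psi$ yields \eqref{eq:capacity}. None of this uses \eqref{eq:assumption}.

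\textbf{Main obstacle.} The main obstacle is Step 2, specifically controlling the terms where $dd^c$ falls on the weight $-\rho$ together with the mixed gradient terms. The cleanest route may be to first establish the unweighted inequality for the measure $(-\rho)\,(dd^c\psi)^n$ by comparing with $\max\{\psi,h\}$-type functions. If the bookkeeping fails, my fallback is to absorb $\rho$ into the test function: note that $\int_E(dd^c\psi)^n\le\int_{\{h<-1+\cdot\}}\cdots$ and use the comparison principle with $v=\varepsilon^{-1}\rho$ directly.
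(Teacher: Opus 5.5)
Your Steps 1 and 3 are correct. The weighted inequality of Step 2, however, is false, so there is a genuine gap that no bookkeeping can close. Incidentally, after integrating by parts, the term where $dd^c$ falls on the weight is $\int(-\psi)(-h)^k\,dd^c\rho\wedge T\ge0$. It sits on the side you are bounding from above, so it does not ``have the right sign''.

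A counterexample already occurs for $n=1$. Let $\Omega=\mathbb D$ and $B=\overline{D(0,r_B)}$ with $r_B<1/2$, so $\rho=\max\{-1,\log|\zeta|/L\}$ with $L=\log(1/r_B)$. Let $w=1-\delta$, and let $\psi$ be the equilibrium potential of the segment $S=[1/2,1-3\delta]$; it is admissible and has zero boundary values.

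For $\zeta=1-d\in S$ one computes $-g_{\mathbb D}(\zeta,w)=\log\frac{d+\delta-d\delta}{d-\delta}\in[\delta/d,\log 2]$. Hence $h=g_{\mathbb D}(\cdot,w)$ on $S$, and $(-\rho)(-h)\ge (d/L)(\delta/d)=\delta/L$ there. Since $dd^c\psi$ is carried by $S$, your left-hand side is at least $(\delta/L)\,\mathrm{Cap}_{\mathbb D}(S)$. Moreover $\mathrm{Cap}_{\mathbb D}(S)\gtrsim\log(1/\delta)$: under $\xi=\log\frac{1+\zeta}{1-\zeta}$ the disc becomes the strip $|\operatorname{Im}\xi|<\pi/2$ and $S$ becomes a slit of length $\approx\log(1/\delta)$, whose Dirichlet energy is proportional to its length.

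Your right-hand side is $\int(-\rho)\,dd^ch\le-\rho(w)\le 2\delta/L$. So the ratio is at least of order $\log(1/\delta)$, and no constant, let alone $n!$, works. The loss is already incurred in Step 1. Replacing the indicator of $\{-\rho>\varepsilon\}$ by the weight $-\rho/\varepsilon$ on all of $\Omega$ lets $\psi$ collect mass along the whole segment $S$, where $(-\rho)(-h)\asymp-\rho(w)$ but the capacity is of size $\log(1/\delta)$.

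The missing idea is to put $\rho$ into the test function rather than into the weight: set $v=\max\{\rho/\varepsilon,\psi\}$. Then $-1\le v\le 0$ and $-v\le-\rho/\varepsilon$. Also $v=\psi$ on the open set $\{-\rho>\varepsilon\}$, so $(dd^cv)^n=(dd^c\psi)^n$ there. B\l ocki's unweighted inequality $\int_\Omega(-u)^n(dd^cv)^n\le n!\,\|v\|_\infty^{n-1}\int_\Omega(-v)(dd^cu)^n$ needs zero boundary values only for $u$. Applied with your $u=h$, it gives
\[
\int_E(dd^c\psi)^n\le\int_\Omega(-h)^n(dd^cv)^n\le n!\int_\Omega(-v)(dd^ch)^n\le\frac{n!}{\varepsilon}\int_\Omega(-\rho)(dd^ch)^n\le\frac{n!}{\varepsilon}\bigl(-\rho(w)\bigr).
\]
The last step is your Lelong--Jensen Step 3. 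Now $-\rho$ is only ever integrated against $(dd^ch)^n$, never against $(dd^c\psi)^n$.

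This is the paper's proof, except that the paper uses $g_j=\max\{g_\Omega(\cdot,w),-j\}$ and lets $j\to\infty$, so that $(dd^cg_j)^n$ concentrates at $w$, instead of invoking Lelong--Jensen. Your reduction of $\psi$ to smooth functions with zero boundary values then becomes unnecessary. Your fallback mentions $\varepsilon^{-1}\rho$, but the essential point is taking the maximum with $\psi$. That keeps the competitor admissible while making it agree with $\psi$ exactly on the set whose capacity is being measured.
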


\begin{proof}
Fix a test function $\psi\in\mathrm{PSH}(\Omega)$ with
$-1\le\psi\le0$, and set
\[
v=\max\{\rho/\varepsilon,\psi\},\qquad
g_j=\max\{g_\Omega(\cdot,w),-j\},\quad j=1,2,\ldots.
\]
Then $-1\le v\le0$, $-v\le -\rho/\varepsilon$, and $v=\psi$ on the open set $\{-\rho>\varepsilon\}$. The functions $g_j$ are bounded, negative and plurisubharmonic, and have zero boundary values. B\l ocki's integral inequality \cite[Theorem~2.1 and Corollary~2.2]{B93} therefore gives
\begin{equation}\label{eq:Blocki}
\int_\Omega(-g_j)^n(dd^cv)^n
\le n!\int_\Omega(-v)(dd^cg_j)^n
\le\frac{n!}{\varepsilon}\int_\Omega (-\rho)(dd^cg_j)^n.
\end{equation}
The standard truncation property of a Green function says that $(dd^cg_j)^n$ has total mass one and is supported on $\{g_\Omega(\cdot,w)=-j\}$; see \cite[Section~3.3]{B02}. It follows from the well-known estimate
\[
g_\Omega(\zeta,w)\ge\log\frac{|\zeta-w|}{R},
\qquad \text{where}\ R>1+\mathrm{diam}(\Omega),
\]
that
\[
\{g_\Omega(\cdot,w)\le-j\}\subset\overline{B(w,Re^{-j})}.
\]
Since these balls are shrinking to $\{w\}$ as $j\rightarrow+\infty$, it follows that
\begin{equation}\label{eq:Blocki_right}
\lim_{j\rightarrow+\infty}\int_\Omega (-\rho)(dd^cg_j)^n=-\rho(w).
\end{equation}
On the other hand, $(-g_j)^n$ increases pointwise to $(-g_\Omega(\cdot,w))^n$. \eqref{eq:Blocki}, \eqref{eq:Blocki_right} together with the monotone convergence theorem yield
\[
\int_\Omega(-g_\Omega(\cdot,w))^n(dd^cv)^n
\le\frac{n!}{\varepsilon}\bigl(-\rho(w)\bigr).
\]
On $A_\Omega(w,-1)$, the factor $(-g_\Omega)^n$ is larger than one, and on the open subset $\{-\rho>\varepsilon\}$ we have $(dd^cv)^n=(dd^c\psi)^n$. Hence
\[
\int_{A_\Omega(w,-1)\cap\{|\rho|>\varepsilon\}}(dd^c\psi)^n
\le\frac{n!}{\varepsilon}\bigl(-\rho(w)\bigr).
\]
Taking the supremum over $\psi$ proves \eqref{eq:capacity}.
\end{proof}

\begin{lemma}\label{lem:volume}
There are constants $C_V,c_V>0$ such that
\begin{equation}\label{eq:volume}
\mathrm{Vol}\bigl(A_\Omega(w,-1)\cap\{-\rho>\varepsilon\}\bigr)
\le C_V\exp\left[-c_V\left(\frac{\varepsilon}{-\rho(w)}\right)^{1/n}\right]
\end{equation}
for every $w\in\Omega$ and $\varepsilon>0$.
\end{lemma}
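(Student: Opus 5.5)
Set $E:=A_\Omega(w,-1)\cap\{-\rho>\varepsilon\}$ and $s:=\varepsilon/(-\rho(w))$. The plan is to combine Lemma~\ref{lem:capacity}, which gives $\mathrm{Cap}_\Omega(E)\le n!/s$, with a capacity--volume inequality of the type in \cite{ACKPZ09}. Such an inequality says that for a bounded hyperconvex (indeed any bounded) domain there are constants $C,c>0$ depending only on $\Omega$ (its diameter and $n$) with
\[
\mathrm{Vol}(E)\le C\exp\bigl[-c\,\mathrm{Cap}_\Omega(E)^{-1/n}\bigr]
\]
for every Borel set $E\subset\Omega$. Substituting the capacity bound gives $\mathrm{Vol}(E)\le C\exp[-c(n!)^{-1/n}s^{1/n}]$. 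This is exactly \eqref{eq:volume} with $c_V=c(n!)^{-1/n}$ and $C_V=C$.

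To justify the capacity--volume inequality, I would first reduce to a large ball. Choose a ball $\mathbb B\supset\Omega$ of radius $R$. Any test function on $\mathbb B$ restricts to a test function on $\Omega$, so monotonicity of the relative capacity gives $\mathrm{Cap}_{\mathbb B}(E)\le\mathrm{Cap}_\Omega(E)$. It therefore suffices to prove the inequality for $\mathbb B$.

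On the ball I would argue via the relative extremal function $h_E^*$ of $E$ in $\mathbb B$; it is enough to treat compact $E$, by inner regularity of both volume and capacity. The steps are:
\begin{itemize}
\item[(i)] Bedford--Taylor theory gives $\mathrm{Cap}_{\mathbb B}(E)=\int_{\mathbb B}(dd^ch_E^*)^n$.
\item[(ii)] Set $\kappa=\mathrm{Cap}_{\mathbb B}(E)$. The function $\kappa^{-1/n}h_E^*$ has Monge--Amp\`ere mass $1$ and is $\le-\kappa^{-1/n}$ on $E$ off a pluripolar set.
\item[(iii)] Compare it with the Green function $g_{\mathbb B}(\cdot,p)$. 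Alternatively, use the standard Skoda/Zeriahi-type uniform integrability: for a negative psh $\phi$ on $\mathbb B$ with zero boundary values and total Monge--Amp\`ere mass $\le1$, one has $\int_{\mathbb B'}e^{-\gamma\phi}\,dV\le C$ for a fixed smaller ball $\mathbb B'$ and a fixed $\gamma>0$.
\item[(iv)] Chebyshev's inequality then yields $\mathrm{Vol}(E\cap\mathbb B')\le Ce^{-\gamma\kappa^{-1/n}}$.
\end{itemize}
The portion of $E$ near $\partial\mathbb B$ is avoided by taking $\mathbb B$ much larger than $\Omega$, so that $E\subset\Omega\subset\mathbb B'$.

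The main obstacle is step (iii): a uniform exponential-integrability bound for psh functions with Monge--Amp\`ere mass at most one. This is precisely the content of \cite{ACKPZ09}, which I would cite rather than reprove; its hypotheses are satisfied after the reduction to a ball. Finally, for $s$ small the right-hand side of \eqref{eq:volume} is at least $C_V e^{-c_V}$. Taking $C_V\ge e^{c_V}\mathrm{Vol}(\Omega)$ therefore makes the estimate trivially true in that regime, so no lower restriction on $\varepsilon$ is needed.
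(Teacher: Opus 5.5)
Your proposal is correct and follows essentially the paper's route: the paper also combines Lemma~\ref{lem:capacity} with the capacity--volume estimate of \cite[Proposition~6.1]{ACKPZ09}, quoted there as $\mathrm{Vol}(E)\le c_n(\mathrm{diam}\,\Omega)^{2n}\bigl(1+\mathrm{Cap}_\Omega(E)^{-1/n}\bigr)^{n-1}e^{-2n\mathrm{Cap}_\Omega(E)^{-1/n}}$, and absorbs the polynomial factor via $(1+x)^{n-1}e^{-2nx}\le C_ne^{-nx}$ to get $c_V=n/(n!)^{1/n}$. Your reduction-to-a-ball sketch of that inequality goes beyond what the paper does, since the paper simply cites it; the one case the sketch skips is $\mathrm{Cap}_\Omega(E)=0$, where $\kappa^{-1/n}$ is undefined, and that case is trivial because such $E$ has zero Lebesgue measure, as the paper notes.
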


\begin{proof}
When $\mathrm{Cap}_\Omega(A_\Omega(w,-1)\cap\{-\rho>\varepsilon\})=0$, we have $\mathrm{Vol}(A_\Omega(w,-1)\cap\{-\rho>\varepsilon\})=0$, so that \eqref{eq:volume} holds trivially. On the other hand, the capacity--volume estimate of \cite[Proposition~6.1]{ACKPZ09} gives, for a Borel set $E$ of finite positive capacity,
\[
\mathrm{Vol}(E)
\le c_n(\mathrm{diam}(\Omega))^{2n}\bigl(1+\mathrm{Cap}_\Omega(E)^{-1/n}\bigr)^{n-1}\exp\bigl(-2n\mathrm{Cap}_\Omega(E)^{-1/n}\bigr).
\]
Since $(1+x)^{n-1}e^{-2nx}\le C_ne^{-nx}$ for $x\ge0$, \eqref{eq:capacity} implies \eqref{eq:volume} with
\[
C_V=c_nC_n(\mathrm{diam}(\Omega))^{2n},\qquad c_V=\frac{n}{(n!)^{1/n}}. \qedhere
\]
\end{proof}

By \eqref{eq:boundary-mass} and Lemma \ref{lem:volume}, we have
\begin{align}\label{eq:split-mass-0}
\int_{A_\Omega(w,-1)}|k_z|^2\,\mathrm{d} V
&\le C_*\left(\frac{\varepsilon}{\mu(z)}\right)^\beta\\
&\quad+C_V\exp\left[-c_V\left(\frac{\varepsilon}{-\rho(w)}\right)^{1/n}\right]\sup_{\{-\rho>\varepsilon\}}|k_z|^2. \notag
\end{align}
Notice that
\begin{equation}\label{eq:ball-kernel}
|k_z(\zeta)|^2
\le K_\Omega(\zeta)
\le\frac{n!}{\pi^n\delta_\Omega(\zeta)^{2n}}.
\end{equation}
Recall that \eqref{eq:rho-decay} holds when $\delta_\Omega<r_0$. If $-\rho(\zeta)>\varepsilon$ and $\delta_\Omega(\zeta)<r_0$, then \eqref{eq:rho-decay} yields
\[
\eta\left(\log\frac{1}{\delta_\Omega(\zeta)}\right)
< \log\frac{1}{\varepsilon}+\log C_B
\le\log\frac{1}{\varepsilon}+a,
\]
i.e.,
\[
\frac{1}{\delta_\Omega(\zeta)}
\le \exp\left[\vartheta\left(\log\frac{1}{\varepsilon}+a\right)\right].
\]
If $\delta_\Omega(\zeta)\ge r_0$, by \eqref{eq:a-choice} we have
\[
\frac{1}{\delta_\Omega(\zeta)}\le \frac{1}{r_0}
\le\exp\left[\vartheta(a)\right].
\]
These combined with \eqref{eq:ball-kernel} imply
\begin{equation}\label{eq:interior-kernel}
\sup_{\{-\rho>\varepsilon\}}|k_z|^2
\le \frac{n!}{\pi^n}\exp\left[2n\vartheta\left(\log\frac{1}{\varepsilon}+a\right)\right],\qquad z\in\Omega.
\end{equation}
By \eqref{eq:split-mass-0} and \eqref{eq:interior-kernel}, we obtain the estimate used below:
\begin{align}\label{eq:split-mass}
\int_{A_\Omega(w,-1)}|k_z|^2\,\mathrm{d} V
&\le C_*\left(\frac{\varepsilon}{\mu(z)}\right)^\beta\\
&\quad+C_{**}\exp\left[2n\vartheta\left(\log\frac{1}{\varepsilon}+a\right)-c_V\left(\frac{\varepsilon}{-\rho(w)}\right)^{1/n}\right], \notag
\end{align}
where $C_{**}=\tfrac{n!}{\pi^n}C_V$ and $0<\varepsilon<1$, $\varepsilon\le\varepsilon_\beta\mu(z)$.

\section{Separation in the Bergman distance}\label{sec:Bergman}
\begin{proposition}\label{prop:separation}
With $a$ chosen as above, we can choose sufficiently large $b$ such that for
\[
F(s)=s+b+\log(1+\vartheta(s+a))
\]
and any $z,w\in\Omega$ with
\begin{equation}\label{eq:separation}
\log\left(-\frac{1}{\rho(w)}\right)
\ge F^{\circ(n+1)}\left(\log\left(-\frac{1}{\rho(z)}\right)\right)
\end{equation}
we have
\[
d_{B,\Omega}(z,w)\ge \arccos\frac{1}{\sqrt{8}}.
\]
Here $F^{\circ m}$ denotes the $m$-fold composition of $F$.
\end{proposition}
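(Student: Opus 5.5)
The plan is to use Kobayashi's inequality to convert a bound on the normalized kernel $k_z$ into a lower bound for $d_{B,\Omega}(z,w)$. That bound will come from the mass estimate \eqref{eq:split-mass}, applied with a carefully chosen $\varepsilon$.

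\textbf{Step 1: reduction to a mass estimate.} By Kobayashi's embedding (the Fubini--Study distance in the projective Hilbert space is dominated by the Bergman distance) we have
\[
\cos d_{B,\Omega}(z,w)\le\frac{|K_\Omega(z,w)|}{\sqrt{K_\Omega(z)K_\Omega(w)}}=\frac{|k_z(w)|}{\sqrt{K_\Omega(w)}}
\]
whenever $d_{B,\Omega}(z,w)<\pi/2$. Next I will invoke the localization in the form behind Lemma~\ref{lem:localization1}: the Donnelly--Fefferman/Berndtsson $L^2$-estimate with the Green function weight, as in \cite{C99,Herbort99}. It gives, for every $f\in A^2(\Omega)$,
\[
\frac{|f(w)|^2}{K_\Omega(w)}\le C_n'\int_{A_\Omega(w,-1)}|f|^2\,\mathrm dV .
\]
Applying this to $f=k_z$ reduces the proposition to showing that
\[
\int_{A_\Omega(w,-1)}|k_z|^2\,\mathrm{d}V\le\frac{1}{8C_n'}.
\]
Indeed, this gives $\cos d_{B,\Omega}(z,w)\le 1/\sqrt 8$. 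If the constant does not match exactly, the threshold $1/\sqrt8$ is reached by absorbing $C_n'$ into the choice of $b$.

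\textbf{Step 2: choosing $\varepsilon$.} Put $s=\log(-1/\rho(z))$ and $t=\log(-1/\rho(w))$. I will use \eqref{eq:split-mass} and make each of its two terms at most $1/(16C_n')$.

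For the first term, it suffices that $\varepsilon\le\theta\mu(z)$, where $\theta\le\varepsilon_\beta$ is a small constant depending on $C_*$ and $\beta$. By \eqref{eq:mu}, this amounts to choosing
\[
\sigma:=\log\frac1\varepsilon=s+\log\frac{1}{c_0\theta}+\log\bigl(1+\vartheta(s+a)\bigr).
\]
Once $b\ge\log(1/(c_0\theta))$, this gives $\sigma\le F(s)$. We may then decrease $\varepsilon$ so that $\sigma=F(s)$ exactly, and the first term only gets smaller. Since $\sigma>0$, we also have $\varepsilon<1$, as \eqref{eq:split-mass} requires.

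\textbf{Step 3: the interior term.} This is the main step. We need
\[
c_V\,e^{(t-\sigma)/n}\ \ge\ 2n\,\vartheta(\sigma+a)+\log\bigl(16C_n'C_{**}\bigr).
\]
It suffices that
\[
t\ \ge\ \sigma+n\bigl(b'+\log(1+\vartheta(\sigma+a))\bigr)
\]
for a suitable constant $b'$ depending only on $n$, $c_V$ and $C_{**}$; here we use $\log(x+C)\le \log(1+x)+\log(1+C)$. Take $b\ge b'$. Since $\vartheta$ is nondecreasing and $F(x)\ge x$, every iterate satisfies $F^{\circ j}(\sigma)\ge\sigma$. Hence each increment $F^{\circ(j+1)}(\sigma)-F^{\circ j}(\sigma)$ is at least $b+\log(1+\vartheta(\sigma+a))$, which gives
\[
F^{\circ n}(\sigma)\ge\sigma+n\bigl(b+\log(1+\vartheta(\sigma+a))\bigr).
\]
With $\sigma=F(s)$, hypothesis \eqref{eq:separation}, namely $t\ge F^{\circ(n+1)}(s)=F^{\circ n}(\sigma)$, gives exactly what is needed. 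This explains the count of $n+1$ iterations: one iteration pays for the boundary layer, and $n$ iterations pay for the $1/n$-th root in the capacity--volume bound.

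The main obstacle I expect is bookkeeping rather than new ideas. The constants $b$, $\theta$ and $b'$ must be fixed independently of $z$ and $w$. The side conditions $\varepsilon<1$ and $\varepsilon\le\varepsilon_\beta\mu(z)$ must hold simultaneously. Finally, the normalization constant in the Step~1 localization inequality must be tracked so that the final threshold is exactly $\arccos(1/\sqrt8)$.
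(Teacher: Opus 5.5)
Your proposal is correct and follows the paper's proof essentially step for step. Your localization inequality is exactly the extremal property of $K_{A_\Omega(w,-1)}$ combined with Lemma~\ref{lem:localization1}, so $C_n'=L_n$, and your choice $\varepsilon\approx\theta\mu(z)$, the iterate bound $F^{\circ n}(\sigma)\ge\sigma+n\bigl(b+\log(1+\vartheta(\sigma+a))\bigr)$, and the accounting ``one iteration for the boundary layer, $n$ for the $1/n$-th root in the capacity--volume bound'' all match the paper. The differences are only cosmetic: you shrink $\varepsilon$ so that $\log(1/\varepsilon)=F(s)$ exactly, whereas the paper keeps $\log(1/\varepsilon)\le F(s)$ and uses that $F$ is nondecreasing; and you bundle the paper's constants $M,\kappa$ into a single $b'$.
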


\begin{proof}
Fix $\beta=1/2$ in \eqref{eq:split-mass}. Choose $0<\lambda<\min\{1,\varepsilon_\beta\}$ and put $\varepsilon=\lambda\mu(z)$. By the definition of $\mu$ (see \eqref{eq:mu}), we have
\begin{equation}\label{eq:epsilon-choice}
\log\frac{1}{\varepsilon}
=\log\left(\frac{1}{-\rho(z)}\right)+\log\left(1+\vartheta\left(\log\left(\frac{1}{-\rho(z)}\right)+a\right)\right)+\log\frac{1}{\lambda c_0}.
\end{equation}
In what follows, we take $\lambda$ so small that
\[
C_*\lambda^\beta\le\frac{1}{16L_n}\qquad\text{and}\qquad \log\frac1{\lambda c_0}>0.
\]
Thus the first term of \eqref{eq:split-mass} is now at most $(16L_n)^{-1}$.

Choose $M>0$ with $C_{**}e^{-M}\le(16L_n)^{-1}$, and then choose $0<\kappa<1$ such that $c_V\kappa^{-1/n}\ge2n+M$. Take
\[
b\ge\max\left\{1,\log\frac{1}{\lambda c_0},\frac1n\log\frac1\kappa\right\},\qquad H(s)=b+\log(1+\vartheta(s+a)).
\]
Thus $F(s)=s+H(s)$, with $H$ positive and nondecreasing. Induction gives
\begin{equation}\label{eq:iterate-absorption}
F^{\circ m}(s)\ge s+mH(s),\qquad \forall\,m\ge1.
\end{equation}
By \eqref{eq:epsilon-choice}, we have
\[
\log\frac{1}{\varepsilon}
\le F\left(\log\left(-\frac{1}{\rho(z)}\right)\right),
\]
so that
\[
\log\frac{1}{\varepsilon}+nH\left(\log\frac{1}{\varepsilon}\right)
\le F^{\circ n}\left(\log\frac{1}{\varepsilon}\right)
\le F^{\circ (n+1)}\left(\log\left(-\frac{1}{\rho(z)}\right)\right)
\le \log\left(-\frac{1}{\rho(w)}\right).
\]
Thus
\begin{equation}\label{eq:sufficient-rho}
-\rho(w)\le\frac{\kappa\varepsilon}{(1+\vartheta(\log(1/\varepsilon)+a))^n}.
\end{equation}
By \eqref{eq:sufficient-rho}, the exponent in the second term of \eqref{eq:split-mass} is at most
\[
2n\vartheta\left(\log\frac{1}{\varepsilon}+a\right)-(2n+M)\left(1+\vartheta\left(\log\frac{1}{\varepsilon}+a\right)\right)\le-M.
\]
Consequently
\begin{equation}\label{eq:small-mass}
\int_{A_\Omega(w,-1)}|k_z|^2\,\mathrm{d} V\le\frac1{8L_n}.
\end{equation}

The extremal property of the Bergman kernel on $A_\Omega(w,-1)$ gives
\[
|k_z(w)|^2\le K_{A_\Omega(w,-1)}(w)\int_{A_\Omega(w,-1)}|k_z|^2\,\mathrm{d} V.
\]
Using Lemma \ref{lem:localization1} and \eqref{eq:small-mass}, we obtain
\begin{equation}\label{eq:correlation}
\frac{|K_\Omega(z,w)|^2}{K_\Omega(z)K_\Omega(w)}
=\frac{|k_z(w)|^2}{K_\Omega(w)}\le\frac18.
\end{equation}
Finally, the Bergman kernel map into projective Hilbert space pulls back the Fubini--Study metric to \eqref{eq:metric} (cf. Kobayashi \cite{Kobayashi59}). The distance between the lines represented by $k_z$ and $k_w$ is $\arccos|\langle k_z,k_w\rangle|$. Thus
\[
d_{B,\Omega}(z,w)\ge
\arccos\frac{|K_\Omega(z,w)|}{\sqrt{K_\Omega(z)K_\Omega(w)}}
\ge \arccos\frac{1}{\sqrt{8}}.\qedhere
\]
\end{proof}

\begin{proof}[Proof of Theorem~\ref{thm:main}]
Choose $a,b$ as in Proposition~\ref{prop:separation}. Take $\tau_0\geq a+1$ and set $s_0:=\tau_0-a$. Define a sequence $\{s_k\}$ by
\[
s_{k+1}=F(s_k),\qquad k=0,1,2,\cdots.
\]
Then $\tau_k=s_k+a$. Since $a\geq A$, if $\eta(\log(1/\delta_\Omega(z)))\geq\tau_k$, then
\[
\log\left(-\frac{1}{\rho(z)}\right)
\geq\eta\left(\log\frac{1}{\delta_\Omega(z)}\right)-A
\geq s_k.
\]

Let $\gamma:[0,1]\to\Omega$ be any piecewise $C^1$ curve joining $z_0$ to $z$. Write $m:=\left\lfloor{k/(n+1)}\right\rfloor$. Recall that $z_0\in B$, i.e., $-\rho(z_0)=1$. If $-\rho(z)\le e^{-1}$, the continuous function $-\rho\circ\gamma$ starts at one and ends at or below each of the levels $e^{-s_{(n+1)j}}$, $0\le j\le m$. We may therefore choose successive parameters $0<t_0<\cdots<t_m\le1$ with
\[
-\rho(\gamma(t_j))=e^{-s_{(n+1)j}},\qquad 0\le j\le m.
\]
Write $d^*:=\arccos(1/\sqrt{8})$ for simplicity. Since $s_{(n+1)(j+1)}=F^{\circ(n+1)}(s_{(n+1)j})$, the length of each subarc for $t\in[t_j,t_{j+1}]$ is therefore at least $d_*$, in view of Proposition \ref{prop:separation}. Thus
\begin{equation}\label{eq:length_curve}
\ell_{B,\Omega}(\gamma)
\ge md_*
\ge\frac{d_*}{n+1}k-d_*.
\end{equation}
When $|\rho(z)|>e^{-1}$, \eqref{eq:length_curve} holds trivially because $k=0$. Taking the infimum over all such curves gives
\[
d_{B,\Omega}(z_0,z)\ge\frac{d_*}{n+1}k-d_*.\qedhere
\]
\end{proof}

\section{Integral estimates}\label{sec:example}

\begin{proof}[Proof of Corollary~\ref{cor:integral}]
Recall that $H(s)=F(s)-s=b+\log(1+\vartheta(s+a))$. Since $H$ is nondecreasing, we have
\[
\int_{s_l}^{s_{l+1}}\frac{\mathrm ds}{H(s)}
\le\frac{s_{l+1}-s_l}{H(s_l)}=1.
\]
Replace $s$ by $s-a$. Since $\tau_l=s_l+a$, we get
\[
\int_{\tau_l}^{\tau_{l+1}}\frac{\mathrm ds}{b+\log(1+\vartheta(s))}
\le1.
\]
Take $k:=\max\{j:\tau_j\le \eta(\log(1/\delta_\Omega(z)))\}$. It follows that
\begin{align}\label{eq:count-integral}
k
&\ge \int_{\tau_0}^{\eta(\log(1/\delta_\Omega(z)))}\frac{\mathrm ds}{b+\log(1+\vartheta(s))}-1\notag\\
&\gtrsim \int_{\tau_0}^{\eta(\log(1/\delta_\Omega(z)))}\frac{\mathrm ds}{b+\log(1+\vartheta(s))}.
\end{align}
Since $\eta(\log(1/\delta_\Omega(z)))\geq \tau_k$, we infer from \eqref{eq:main} and \eqref{eq:count-integral} that
\[
d_{B,\Omega}(z_0,z)
\gtrsim\int_{\tau_0}^{\eta(\log(1/\delta_\Omega(z)))}\frac{\mathrm ds}{b+\log(1+\vartheta(s))}.
\]
Finally, since $\vartheta(s)\ge t_0\ge1$, we have
\[
b+\log(1+\vartheta(s))\asymp1+\log \vartheta(s).
\]
Here and below $f\asymp g$ means that $f\lesssim g$ and $g\lesssim f$. Taking $s_0=\tau_0$ proves \eqref{eq:integral}.

When $\eta$ is continuous and strictly increasing, $\vartheta(\eta(t))=t$ for $t\ge t_0$. The substitution $v=\eta(t)$ therefore gives \eqref{eq:differential}.
\end{proof}

Table~\ref{tab:moduli} records some consequences of \eqref{eq:differential} and \eqref{eq:eta-loglog} by straightforward calculations:

\begin{table}[ht]
\centering
\caption{Distance lower bounds.}
\label{tab:moduli}
\renewcommand{\arraystretch}{1.2}
\begin{tabular}{@{}lll@{}}
\toprule
$\eta(t)$ & Parameters & Lower bound\\
\midrule
$\alpha t$ & $\alpha>0$ & $\log(1/\delta_\Omega)/\log \log(1/\delta_\Omega)$\\
$t^\theta$ & $0<\theta<1$ & $(\log(1/\delta_\Omega))^\theta/\log\log(1/\delta_\Omega)$\\
$(\log t)^\beta$ & $\beta>1$ & $(\log\log(1/\delta_\Omega))^{\beta-1}$\\
$\alpha\log t$ & $\alpha>0$ & $\log\log\log(1/\delta_\Omega)$\\
\bottomrule
\end{tabular}
\end{table}

\section{Hartogs domains realizing sublinear decay}\label{sec:Hartogs}

We now give examples satisfying \eqref{eq:assumption} for every
nondecreasing function $\eta$ with $\eta(t)\to\infty$ and
$\eta(t)=o(t)$, which proves Theorem \ref{thm:eta-choice}. The following lemma will be used in this construction.

\begin{lemma}\label{lem:hartogs-profile}
Let $\eta:[t_0,\infty)\to(0,\infty)$ be nondecreasing and
$\eta(t)=o(t)$. There is a smooth increasing convex function
$f:[1,\infty)\to(0,\infty)$ such that
\begin{equation}\label{eq:profile-bounds}
f(t)\ge2t,\qquad 2\le f'(t)\le t+2,\qquad \lim_{t\rightarrow+\infty}\frac{f(t)}{t}=+\infty,
\end{equation}
and
\begin{equation}\label{eq:profile-eta}
\eta(3f(t))\le \frac{t}{2}
\end{equation}
for all sufficiently large $t$.
\end{lemma}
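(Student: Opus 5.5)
The plan is to build $f$ by prescribing its derivative. We take $f'$ to be a smooth, nondecreasing function that climbs by one unit at a time along a sparse sequence of times $T_k\to\infty$. The times are chosen, using $\eta(t)=o(t)$, so late that the growth of $f$ cannot outrun the sublinearity of $\eta$.

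\emph{The modulus.} Put
\[
\omega(s):=\sup_{s'\ge s}\frac{\eta(s')}{s'},\qquad s\ge t_0 .
\]
Then $\omega$ is finite and nonincreasing, and $\omega(s)\to0$ as $s\to\infty$ because $\eta(t)=o(t)$. Moreover $\eta(s)\le s\,\omega(s)$. Consequently \eqref{eq:profile-eta} follows as soon as
\[
3f(t)\,\omega(3f(t))\le \frac t2 ,
\qquad\text{i.e.}\qquad
f(t)\,\omega(3f(t))\le \frac t6 .
\]

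\emph{Choice of the times $T_k$.} We choose $T_1<T_2<\cdots$ inductively so that
\[
T_{k+1}\ge T_k+1,\qquad T_k\ge k+1,\qquad \omega(6T_k)\le\frac{1}{6(k+3)} .
\]
This is possible since $\omega\to0$.

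\emph{Choice of $f'$.} Let $h:[1,\infty)\to[2,\infty)$ be smooth and nondecreasing with $h=2$ on $[1,T_1]$ and
\[
k+1\le h\le k+2\quad\text{on }[T_k,T_{k+1}].
\]
Such an $h$ is obtained by gluing monotone smooth transition functions in $[T_k,T_k+1/2]$. Set
\[
f(t)=2+\int_1^t h .
\]
Then $f$ is smooth, increasing and convex, since $h>0$ and $h$ is nondecreasing.

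\emph{The bounds \eqref{eq:profile-bounds}.} From $f(1)=2$ and $f'\ge2$ we get $f(t)\ge2t$. On $[T_k,T_{k+1}]$ we have $f'=h\le k+2\le T_k+1\le t+2$. Since $h\to\infty$, L'Hôpital (or convexity) gives $f(t)/t\to\infty$.

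\emph{The bound \eqref{eq:profile-eta}.} Since $h\le k+2$ on $[1,T_{k+1}]$, for $t\in[T_k,T_{k+1}]$ we get
\[
f(t)\le 2+(k+2)(t-1)\le (k+3)t .
\]
Also $3f(t)\ge 6t\ge 6T_k$, so monotonicity of $\omega$ gives
\[
\omega(3f(t))\le \omega(6T_k)\le \frac{1}{6(k+3)} .
\]
Multiplying the two bounds yields $f(t)\,\omega(3f(t))\le t/6$ for all $t\ge T_1$, which is \eqref{eq:profile-eta}.

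\emph{Main obstacle.} The only delicate point is that $f'\le t+2$ and $f(t)/t\to\infty$ pull in opposite directions from \eqref{eq:profile-eta}. The scheme resolves this by letting the slope increase only at the times $T_k$, which are chosen according to how fast $\omega$ decays. The smoothing itself is routine, provided each transition of $h$ stays within the stated bounds.
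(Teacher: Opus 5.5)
Your proof is correct and is essentially the paper's construction: the paper takes $f(t)=2t+\sum_{j\ge1}\chi(t-t_j)$ with $\chi$ a smoothed ramp, so $f'$ climbs by one unit at sparse times $t_j\ge j+1$ exactly as your $h$ does, and it bounds $f(t)\le(j+2)t$ on $[t_j,t_{j+1})$. The only difference is bookkeeping: instead of your monotone envelope $\omega$, the paper chooses $t_j$ directly so that $\eta(3(j+2)t)\le t/2$ for all $t\ge t_j$, and then needs only the monotonicity of $\eta$.
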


\begin{proof}
Fix a smooth convex function $\chi$ on $\mathbb R$ such that $\chi=0$ on $(-\infty,0]$, $0\le\chi'\le1$, and $\chi'=1$ on $[1,\infty)$. Then $\max\{0,x-1\}\le\chi(x)\le x$ for $x\ge0$. Choose an increasing sequence $t_j\rightarrow+\infty$, with $t_j\ge j+1$, so large that
\[
\eta(3(j+2)t)\le t/2,\qquad\forall\,t\ge t_j,\ \ j\ge1.
\]
This is possible for each $j$ because $\eta(t)/t\to0$.
Define
\begin{equation}\label{eq:profile-construction}
f(t)=2t+\sum_{j\ge1}\chi(t-t_j).
\end{equation}
The sum is locally finite, so $f$ is smooth, increasing and convex. Clearly, $f(t)\geq 2t$. Since $t_j\ge j+1$, we have $t<t_j$ whenever $j\geq\lfloor{t}\rfloor$. Thus there are at most $\lfloor{t}\rfloor$ nonzero terms in the derivative sum at a point $t\ge1$, so that $2\le f'(t)\le t+2$. For any fixed integer $m$, when $t$ is sufficiently large,
\[
 f(t)\ge(m+2)t-\sum_{j=1}^m(t_j+1).
\]
Letting $t\rightarrow+\infty$ and then $m\rightarrow+\infty$ proves $f(t)/t\rightarrow+\infty$. Finally, if $t_j\le t<t_{j+1}$, then $f(t)\le(j+2)t$. Monotonicity of $\eta$ gives
\[
\eta(3f(t))\le\eta(3(j+2)t)\le t/2.
\]
Thus \eqref{eq:profile-eta} holds for $t\ge t_1$.
\end{proof}

Theorem \ref{thm:eta-choice} will follow from the following theorem.

\begin{theorem}\label{thm:hartogs}
Suppose that $\eta:[t_0,\infty)\to(0,\infty)$ is nondecreasing with $\eta(t)\rightarrow+\infty$ and $\eta(t)=o(t)$ as $t\rightarrow+\infty$. Choose $f$ as in
Lemma~\ref{lem:hartogs-profile} and set
\begin{equation}\label{eq:hartogs-radius}
F(z):=\exp\left[-f\left(\log\frac{e}{1-|z|^2}\right)\right],\qquad z\in\mathbb{D}.
\end{equation}
Then the complete Hartogs domain
\begin{equation}\label{eq:hartogs-domain}
\Omega_f:=\{(z,w)\in\mathbb{D}\times\mathbb{C}:|w|<F(z)\}
\end{equation}
is bounded and hyperconvex, where $\mathbb{D}=\{z\in\mathbb{C}:|z|<1\}$ is the unit disc. More precisely,
\begin{equation}\label{eq:hartogs-exhaustion}
u(z,w)=\max\left\{|z|^2-1,\frac{|w|^2}{F(z)^2}-1\right\}
\end{equation}
is a continuous negative plurisubharmonic exhaustion satisfying
\begin{equation}\label{eq:hartogs-realization}
-u(z,w)\le C\exp\left[-\eta\left(\log\frac1{\delta_{\Omega_f}(z,w)}\right)\right]
\end{equation}
near $\partial\Omega_f$. Moreover, the hyperconvexity index of $\Omega_f$ is zero.
\end{theorem}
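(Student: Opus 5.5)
The plan has four parts: boundedness and hyperconvexity, plurisubharmonicity of $u$, the decay estimate \eqref{eq:hartogs-realization} via a comparison of $-u$ with $\delta_{\Omega_f}$, and finally the vanishing of the hyperconvexity index.

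\textbf{Boundedness and plurisubharmonicity.} Boundedness is immediate, since $F\le e^{-f(1)}<1$. For plurisubharmonicity of $u$, note that $|w|^2/F(z)^2=|w|^2\exp[2f(\log\frac{e}{1-|z|^2})]$. This is psh if $\phi(z):=f(\log\frac{e}{1-|z|^2})$ is subharmonic, because then $\log(|w|^2e^{2\phi})$ is psh and the exponential of a psh function is psh. Now $s(z)=\log\frac{e}{1-|z|^2}=1-\log(1-|z|^2)$ is subharmonic, and $f$ is convex and increasing, so $\phi=f\circ s$ is subharmonic. Hence $u$ is a maximum of two continuous psh functions. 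It is negative on $\Omega_f$ and tends to $0$ at $\partial\Omega_f$. This last point needs care at points of $\partial\mathbb D\times\{0\}$, but there $|z|^2-1\to0$ and $u\le0$, so it holds. This gives hyperconvexity.

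\textbf{Decay estimate.} I will compare $-u$ with $\delta=\delta_{\Omega_f}(z,w)$ and split into two cases.

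\emph{Case 1: $|w|^2/F^2-1\ge|z|^2-1$.} Then $-u\le 1-|w|^2/F^2\le 2(F-|w|)/F$. A point $(z,w')$ with $|w'|=F(z)$ lies on the boundary, so $\delta\le F-|w|$. Moreover $F(z)\ge\delta$ trivially, because the disc $\{z\}\times\{|w'|<F\}$ must contain the $\delta$-ball slice around $w$. This crude bound is too weak, so I will use the Lipschitz behaviour of $F$ instead. The slope of $\log F$ in $|z|$ is $f'(s)\cdot\frac{2|z|}{1-|z|^2}\lesssim s\,e^{s}$, using $f'(s)\le s+2$ from \eqref{eq:profile-bounds}. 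Within distance $\delta$ of $z$, $F$ can therefore only change by a controlled factor, provided $\delta\ll(1-|z|^2)/s$.

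\emph{Case 2: $|z|^2-1>|w|^2/F^2-1$.} Then $-u\le 1-|z|^2\le 2(1-|z|)$. Using $\delta\le F(z)$ and $F(z)=\exp[-f(s)]$ with $s\approx\log\frac1{1-|z|}$, I expect
\[
\log\frac1\delta\ \ge\ \min\Bigl\{f(s),\ \log\frac{1}{F-|w|}\Bigr\}-O(\log s).
\]
In all cases I aim for $\log\frac1\delta\le 3f(s)$ (or an analogous bound, up to constants) together with $-u\lesssim\max\{e^{-s},\ (F-|w|)/F\}$. Then \eqref{eq:profile-eta} gives $\eta(\log\frac1\delta)\le\eta(3f(s))\le s/2$, so that
\[
e^{-\eta(\log 1/\delta)}\ge e^{-s/2}\gtrsim(1-|z|^2)^{1/2}\ge -u
\]
in the regime where $-u\lesssim 1-|z|^2$.

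Where $-u\approx(F-|w|)/F$ is the smaller quantity, I will use $\delta\gtrsim\min\{F-|w|,\ (F-|w|)/(se^{s}f')\}$. This lower bound on $\delta$ comes from the Lipschitz estimate on $F$, and ideally one reduces to the regime where the $|z|$-term dominates. The main obstacle is exactly this case analysis: controlling $\delta$ from below near the "side" boundary $|w|=F(z)$ using the derivative bound $f'\le t+2$, and absorbing the resulting polynomial factors in $s$ through the factor $3$ and the slack $t/2$ in \eqref{eq:profile-eta}. If the direct comparison fails, I would try replacing $u$ with $\max\{u,\ -(1-|z|^2)^{1/2}\}$-type modifications, or simply handle the region $|w|$ close to $F$ by noting that there $-u$ is small compared with $e^{-s}$.

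\textbf{Index zero.} Suppose $-v\le C\delta^\alpha$ for some psh $v<0$ and some $\alpha>0$. Restrict $v$ to the discs $\{z\}\times\{|w|<F(z)\}$, or rather look at the points $(z,0)$. There $\delta\le F(z)=e^{-f(s)}$, so $-v(z,0)\le Ce^{-\alpha f(s)}$. Since $f(s)/s\to\infty$, this decays faster than every power of $1-|z|$. On the other hand, $z\mapsto v(z,0)$ is a negative subharmonic function on $\mathbb D$, so the Hopf lemma (comparison with $\epsilon\log|z|$ near the boundary, via harmonic majorants) gives $-v(z,0)\ge c(1-|z|)$. This is a contradiction, so the hyperconvexity index is zero.
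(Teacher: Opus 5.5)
Your arguments for plurisubharmonicity, for the exhaustion property and for the vanishing of the hyperconvexity index match the paper and are fine. The latter uses restriction to $w=0$, comparison with $-m\log|z|/\log r_0$ on an annulus, and $\delta_{\Omega_f}(r,0)\le F(r)$ against $f(t)/t\to+\infty$. The gap is the decay estimate \eqref{eq:hartogs-realization}, which is the main quantitative content of the theorem and which you leave unresolved.

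\textbf{First missing idea: $F$ has bounded gradient.} It is $F$ itself, not $\log F$, whose gradient is bounded. With $s=\log\frac{e}{1-|z|^2}$,
\[
|\nabla F(z)|=\frac{2|z|}{1-|z|^2}\,f'(s)\,e^{-f(s)}\le 2(s+2)e^{s-1}e^{-2s}\le 6e^{-2}<1,
\]
using $f'(s)\le s+2$ and $f(s)\ge 2s$ from \eqref{eq:profile-bounds}. Hence $(z,w)\mapsto F(z)-|w|$ is $\sqrt2$-Lipschitz on $\overline{\mathbb D}\times\mathbb C$ and vanishes on $\partial\Omega_f$. This gives $(F(z)-|w|)/\sqrt2\le\delta_{\Omega_f}(z,w)\le F(z)-|w|$ everywhere, with no proviso such as $\delta\ll(1-|z|^2)/s$. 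By working with the slope of $\log F$, which grows like $e^{s}$, you only reach a heuristic local bound. Moreover, the inequality you display in Case 2 runs in the wrong direction. Since $\eta$ is nondecreasing, you need an upper bound on $\log(1/\delta)$, i.e.\ a lower bound on $\delta$, not the reverse.

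\textbf{Second missing idea: the correct case split.} Your target ``$\log(1/\delta)\le 3f(s)$ in all cases'' is false. For fixed $z$ and $|w|\to F(z)$ one has $\delta\to0$ while $s$ stays fixed. For the same reason your fallback (``$-u$ is small compared with $e^{-s}$'') cannot work. There $\eta(\log(1/\delta))\to+\infty$, so no bound $e^{-\eta(\log 1/\delta)}\gtrsim e^{-s}$ is available.

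What is needed is the following.
\begin{itemize}
\item[(i)] Put $\tau=\log\frac{1}{1-|w|/F(z)}$.
\item[(ii)] Use both bounds $-u\le 1-|z|^2=e^{1-s}$ and $-u\le 1-|w|^2/F^2\le 2e^{-\tau}$ simultaneously. That is, $-u$ is the minimum of the two quantities, not the maximum you wrote.
\item[(iii)] The Lipschitz comparison gives $\log(1/\delta)\le f(s)+\tau+\log\sqrt2$.
\end{itemize}
If $\tau\le f(s)$, then $\log(1/\delta)\le 3f(s)$, and \eqref{eq:profile-eta} gives $\eta\le s/2+C_0$, which pairs with $e^{1-s}$. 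If $\tau>f(s)$, then $\log(1/\delta)\le 3\tau$, and you need $\eta(3\tau)\le \tau/2+C_0$, which pairs with $2e^{-\tau}$. This last inequality comes directly from $\eta(t)=o(t)$, not from \eqref{eq:profile-eta}. This second regime, where sublinearity of $\eta$ is applied to the fiber variable $\tau$, is absent from your plan, and without it the estimate fails near the side boundary $|w|=F(z)$.
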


\begin{proof}
The fibers are balls of positive radius, so $\Omega_f$ is a connected open set. It is bounded since $F\le e^{-2}$. The function $\log(e/(1-|z|^2))$ is subharmonic on $\mathbb{D}$, and $f$ is convex and increasing. Thus $f(\log(e/(1-|z|^2)))$ is subharmonic. On $\mathbb{D}\times\mathbb{C}$ the function
\[
\log|w|^2+2f\left(\log\frac{e}{1-|z|^2}\right)
\]
is plurisubharmonic, with value $-\infty$ at $w=0$. Its exponential, extended by zero there, is the continuous plurisubharmonic function $|w|^2/F(z)^2$. Both entries of the maximum in \eqref{eq:hartogs-exhaustion} are therefore plurisubharmonic, and $-1\le u<0$ on $\Omega_f$. It is not difficult to see that $u$ is an exhaustion function, so that $\Omega_f$ is hyperconvex.

We next compare the boundary distance with $-u$. Notice that $\log(e/(1-|z|^2))\geq1$. By \eqref{eq:profile-bounds},
\begin{align}\label{eq:radius-gradient}
|\nabla F(z)|
&=\frac{2|z|}{1-|z|^2}f'\left(\log\frac{e}{1-|z|^2}\right) \exp\left[-f\left(\log\frac{e}{1-|z|^2}\right)\right]\notag\\
&\le 2\left(\log\frac{e}{1-|z|^2}+2\right)\exp\left[-\log\frac{e}{1-|z|^2}-1\right]\notag\\
&\le 6e^{-2} <1.
\end{align}
Also $F(z)\le \exp(-2\log(e/(1-|z|^2)))\le e^{-2}(1-|z|^2)^2$, so $F$ extends continuously by zero to $\partial\mathbb{D}$. The boundary of $\Omega_f$ consists of the points with $|z|<1$, $|w|=F(z)$, together with $(z,0)$ for $|z|=1$. The function $(z,w)\mapsto F(z)-|w|$ is $\sqrt2$-Lipschitz on $\overline{\mathbb{D}}\times\mathbb{C}$ by \eqref{eq:radius-gradient}, and vanishes on $\partial\Omega_f$. Hence
\begin{equation}\label{eq:fiber-gap}
\frac{F(z)-|w|}{\sqrt2}\le\delta_{\Omega_f}(z,w)\le F(z)-|w|.
\end{equation}
Put
\[
\tau(z,w)=\log\frac1{1-|w|/F(z)}\ge0.
\]
Equation~\eqref{eq:fiber-gap} gives
\begin{equation}\label{eq:hartogs-T}
\log\frac1{\delta_{\Omega_f}(z,w)}\le f\left(\log\frac{e}{1-|z|^2}\right)+\tau(z,w)+\log\sqrt2.
\end{equation}
On the other hand, the definition of $u$ gives the two bounds
\begin{align}
&-u(z,w)\le1-|z|^2=\exp\left[1-\log\frac{e}{1-|z|^2}\right],\label{eq:hartogs-u-bounds1}\\
&-u(z,w)\le1-\frac{|w|^2}{F(z)^2}\le2e^{-\tau(z,w)}.\label{eq:hartogs-u-bounds2}
\end{align}
If necessary, extend $\eta$ constantly to $[0,t_0]$. This does not change the boundary hypothesis. By \eqref{eq:profile-eta} and $\eta(t)=o(t)$, after enlarging a fixed constant $C_0$ we have
\begin{align}
&\eta\left(3f\left(\log\frac{e}{1-|z|^2}\right)\right)\le \frac{1}{2}\log\frac{e}{1-|z|^2}+C_0,\qquad \text{if}\ \log\frac{e}{1-|z|^2}\ge1,\label{eq:hartogs-eta-bounds1}\\
&\eta(3\tau(z,w))\le\frac{\tau(z,w)}{2}+C_0,\qquad \text{if}\ \tau(z,w)\ge0.\label{eq:hartogs-eta-bounds2}
\end{align}
Notice that $f(\log(e/(1-|z|^2)))\ge f(1)\ge 2$. If $\tau(z,w)\le f(\log(e/(1-|z|^2)))$, then \eqref{eq:hartogs-T} gives
\[
\log\frac1{\delta_{\Omega_f}(z,w)}\le 3f\left(\log\frac{e}{1-|z|^2}\right).
\]
Using \eqref{eq:hartogs-u-bounds1} and \eqref{eq:hartogs-eta-bounds1}, we obtain
\[
(-u(z,w))\exp\left[\eta\left(\log\frac1{\delta_{\Omega_f}(z,w)}\right)\right]
\le e^{1+C_0}.
\]
If $\tau(z,w)>f(\log(e/(1-|z|^2)))$, then $\tau>2$ and by \eqref{eq:hartogs-T} we have
\[
\log\frac1{\delta_{\Omega_f}(z,w)}\le 3\tau(z,w).
\]
The bound in \eqref{eq:hartogs-u-bounds2} and \eqref{eq:hartogs-eta-bounds2} gives instead
\[
(-u(z,w))\exp\left[\eta\left(\log\frac1{\delta_{\Omega_f}(z,w)}\right)\right]
\le 2e^{C_0}.
\]
This proves \eqref{eq:hartogs-realization} uniformly near the entire boundary.

Finally, suppose that some negative plurisubharmonic exhaustion $v$ on $\Omega_f$ satisfies $-v\le C\delta_{\Omega_f}^\gamma$ for a fixed $\gamma>0$ near the boundary. Its restriction $z\mapsto v(z,0)$ is a negative subharmonic function on $\mathbb{D}$. Fix $r_0\in(0,1)$ and choose $m>0$ with $v(z,0)\le-m$ on $|z|=r_0$. Comparing $v(z,0)$ with the radial harmonic function $-m\log|z|/\log r_0$ on a fixed annulus $r_0<|z|<1$ gives
\[
-v(r,0)\ge m\frac{\log{r}}{\log r_0}=\frac{m}{-\log r_0}\left(-\log{r}\right)\geq c(1-r^2),\qquad r\rightarrow1^-
\]
for some constant $c>0$ independent of $r$. Taking $z=r$ and $w=0$ in \eqref{eq:fiber-gap}, we get $\delta_{\Omega_f}(r,0)\le F(r)$.
Consequently
\[
c(1-r^2)
\le-v(r,0)
\le CF(r)^\gamma
\le C\exp\left[-\gamma f\left(\log\frac{e}{1-r^2}\right)\right],
\]
which would imply
\[
\gamma f\left(\log\frac{e}{1-r^2}\right)\le \log\frac{e}{1-r^2}+O(1).
\]
This contradicts $f(t)/t\to+\infty$ and proves that the hyperconvexity index is zero.
\end{proof}

\section{An upper bound on a family of complete Hartogs domains}\label{sec:Hartogs-alpha}

We now give an upper bound for the Bergman distance along the axis of a family of complete Hartogs domains. Recall that $\alpha>1$ and
\[
\Omega_\alpha=\left\{(z,w)\in\mathbb C^2:|z|<1,\quad |w|<\exp\left[-\left(\log\frac{e}{1-|z|^2}\right)^\alpha\right]\right\}.
\]
The domain $\Omega_\alpha$ is bounded and hyperconvex. More precisely, we have the following proposition.

\begin{proposition}\label{prop:hartogs-alpha-lower}
$\Omega_\alpha$ admits a continuous negative plurisubharmonic exhaustion function $u_\alpha$ such that
\begin{equation}\label{eq:hartogs-alpha-decay}
-u_\alpha(z,w)
\le3e\exp\left[-\left(\log\frac1{\delta_{\Omega_\alpha}(z,w)}\right)^{1/\alpha}\right].
\end{equation}
\end{proposition}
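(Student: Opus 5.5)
The plan is to reuse the construction from Theorem~\ref{thm:hartogs} with the explicit profile $f(t)=t^\alpha$ and the same candidate exhaustion:
\[
u_\alpha(z,w)=\max\left\{|z|^2-1,\ \frac{|w|^2}{F_\alpha(z)^2}-1\right\},\qquad F_\alpha(z)=\exp\bigl[-T(z)^\alpha\bigr],\quad T(z)=\log\frac{e}{1-|z|^2}\ge1.
\]

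\emph{Plurisubharmonicity and exhaustion.} Since $t\mapsto t^\alpha$ is convex and increasing on $[1,\infty)$ and $T$ is subharmonic, $T^\alpha$ is subharmonic. Hence $\log|w|^2+2T(z)^\alpha$ is plurisubharmonic, and so is its exponential $|w|^2/F_\alpha^2$. Continuity, negativity and the exhaustion property follow exactly as before.

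\emph{Lipschitz bound and the two-sided distance comparison.} Next I verify $|\nabla F_\alpha|<1$. We have
\[
|\nabla F_\alpha|\le \frac{2\alpha T^{\alpha-1}}{1-|z|^2}\,e^{-T^\alpha}=\frac{2\alpha}{e}\,T^{\alpha-1}e^{T-T^\alpha}.
\]
This is not automatically below $1$ when $T$ is close to $1$ and $\alpha$ is large; this is the main technical nuisance. The fix is to avoid needing a global bound. Either I use only the one-sided estimate $\delta_{\Omega_\alpha}\le F_\alpha(z)-|w|$, which is immediate because $(z,w F_\alpha(z)/|w|)\in\partial\Omega_\alpha$ lies at distance $F_\alpha(z)-|w|$, or I accept a Lipschitz constant $L_\alpha$ and absorb it. In fact only the upper bound $\delta\le F_\alpha-|w|$ is needed, since we want $-u$ small when $\delta$ is small, i.e.\ a lower bound on $\log(1/\delta)$ in terms of $T$ and $\tau$.

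\emph{Case analysis.} Set $\tau=\log\bigl(1/(1-|w|/F_\alpha)\bigr)$. Then
\[
\log\frac1\delta\ \ge\ T^\alpha+\tau\ \ge\ \max\{T^\alpha,\tau\}.
\]
Moreover $-u_\alpha\le\min\{1-|z|^2,\ 2e^{-\tau}\}=\min\{e^{1-T},\ 2e^{-\tau}\}$. The required bound, however, goes the other way: I need $-u_\alpha\lesssim\exp[-(\log 1/\delta)^{1/\alpha}]$, so I need an upper bound for $\log(1/\delta)$. Hence I do need the lower distance estimate $\delta\ge (F_\alpha-|w|)/L$. I would obtain it with a local Lipschitz constant: near a boundary point over $z$, the gradient of $F_\alpha$ is bounded by $C_\alpha$, uniformly since $T^{\alpha-1}e^{T-T^\alpha}$ is bounded on $[1,\infty)$. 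This gives
\[
\log\frac1\delta\le T^\alpha+\tau+\log L.
\]

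\emph{Conclusion.} With this in hand, I split into two cases. If $\tau\le T^\alpha$, then $(\log 1/\delta)^{1/\alpha}\le(2T^\alpha+\log L)^{1/\alpha}\le T+c$, and combined with $-u_\alpha\le e\cdot e^{-T}$ this gives the claim. If $\tau>T^\alpha$, then $(\log 1/\delta)^{1/\alpha}\le(2\tau+\log L)^{1/\alpha}\le\tau+c$ because $\alpha>1$ and $\tau\ge1$, and combined with $-u_\alpha\le 2e^{-\tau}$ this gives the claim. The factor $2^{1/\alpha}$ is harmless only when combined with sublinearity: $(2s)^{1/\alpha}\le s$ for $s\ge 2^{1/(\alpha-1)}$, with a bounded error below that. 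Tracking these constants is where the explicit constant $3e$ must come from, and I expect the careful constant bookkeeping, specifically getting $L$ close to $1$ or trading it against the room provided by $s^{1/\alpha}\ll s$, to be the main obstacle.
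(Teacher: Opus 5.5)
There is a genuine gap, and it is not a matter of constant bookkeeping. In your first case you claim $(2T^\alpha+\log L)^{1/\alpha}\le T+c$. But $(2T^\alpha)^{1/\alpha}=2^{1/\alpha}T$, which exceeds $T$ by a fixed multiple of $T$.

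No sharper case split can rescue this. For $1<\alpha<2$ the function $\max\{|z|^2-1,|w|^2/F_\alpha^2-1\}$ violates \eqref{eq:hartogs-alpha-decay} for every constant. Take $z=x\in(0,1)$, $T=\log(e/(1-x^2))$ and $|w|=F_\alpha(x)(1-e^{-T})$, i.e.\ $\tau=T$. Then
\[
-u_\alpha=\min\{e^{1-T},\,2e^{-T}-e^{-2T}\}\ge e^{-T},\qquad \delta_{\Omega_\alpha}\le F_\alpha(x)-|w|=e^{-T^\alpha-T}.
\]
By the mean value theorem (using $T\le T^\alpha$), as $x\to1^-$,
\[
-u_\alpha\exp\Bigl[\Bigl(\log\frac1{\delta_{\Omega_\alpha}}\Bigr)^{1/\alpha}\Bigr]\ge\exp\bigl[(T^\alpha+T)^{1/\alpha}-T\bigr]\ge\exp\Bigl[\frac{2^{1/\alpha-1}}{\alpha}\,T^{2-\alpha}\Bigr]\longrightarrow+\infty .
\]
The reason is that there is no slack. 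The target decay at the fiber centre is $\exp[-(T^\alpha)^{1/\alpha}]=e^{-T}$, which is already of the same order as $1-|z|^2=e^{1-T}$. So the extra $\tau$ in $\log(1/\delta)=T^\alpha+\tau+O(1)$ cannot be absorbed. In Theorem~\ref{thm:hartogs} such losses were swallowed by the margin $\eta(3f(t))\le t/2$, and that margin is absent here. Put differently, $|w|^2/F_\alpha(z)^2$ has $w$-Lipschitz constant of order $1/F_\alpha(z)$ near the fiber boundary, far from the target modulus. For $\alpha\ge2$ a corrected case analysis does work, but only with an $\alpha$-dependent constant.

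What is missing is an exhaustion whose modulus of continuity is exactly the target profile. The paper takes $u_\alpha=|z|^2+h_\alpha(|w|)-1$ with $h_\alpha(r)=\exp[1-(\log(1/r))^{1/\alpha}]$. It has three properties:
\begin{itemize}
\item[(i)] It is plurisubharmonic, since $x\mapsto\exp[1-(-x)^{1/\alpha}]$ is convex and increasing for $x\le-1$.
\item[(ii)] It vanishes on $\partial\Omega_\alpha$, since $h_\alpha(F_\alpha(z))=1-|z|^2$.
\item[(iii)] Since $h_\alpha(r)/r$ is nonincreasing with $h_\alpha(e^{-1})=1$, one gets $h_\alpha(r)\ge r$ and the subadditivity bound $|h_\alpha(a)-h_\alpha(b)|\le h_\alpha(|a-b|)$.
\end{itemize}
Now let $Q=(\zeta,\omega)$ be a nearest boundary point of $P$. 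Then
\[
-u_\alpha(P)=u_\alpha(Q)-u_\alpha(P)\le2|\zeta-z|+h_\alpha\bigl(\bigl||\omega|-|w|\bigr|\bigr)\le2\delta+h_\alpha(\delta)\le3h_\alpha(\delta),
\]
and $3h_\alpha(\delta)$ is exactly $3e\exp[-(\log(1/\delta))^{1/\alpha}]$. No comparison between $\delta$ and $F_\alpha-|w|$ is needed at all.
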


Consequently, for any fixed $P_0\in\Omega_\alpha$,
\begin{equation}\label{eq:hartogs-alpha-lower}
d_{B,\Omega_\alpha}(P_0,P)
\gtrsim
\frac{(\log(1/\delta_{\Omega_\alpha}(P)))^{1/\alpha}}
{\log\log(1/\delta_{\Omega_\alpha}(P))}
\qquad\text{as }P\rightarrow\partial\Omega_\alpha.
\end{equation}
(See Table \ref{tab:moduli}.)

\begin{proof}
Put
\[
h_\alpha(r):=\begin{cases}
\displaystyle\exp\left[1-\left(\log\frac1r\right)^{1/\alpha}\right], &0<r\le e^{-1},\\
0,&r=0,
\end{cases}
\]
and define
\begin{equation}\label{eq:hartogs-alpha-exhaustion}
u_\alpha(z,w)=|z|^2+h_\alpha(|w|)-1.
\end{equation}
The function
\[
x\longmapsto\exp[1-(-x)^{1/\alpha}],\qquad x\le-1,
\]
is convex and increasing. Thus $h_\alpha(|w|)$ is subharmonic for $0<|w|<e^{-1}$, and its continuous extension across $w=0$ is also subharmonic. It follows that $u_\alpha$ is continuous and plurisubharmonic on $\Omega_\alpha$.

Write
\[
F_\alpha(z)
=\exp\left[-\left(\log\frac e{1-|z|^2}\right)^\alpha\right].
\]
Since $h_\alpha(F_\alpha(z))=1-|z|^2$ and $h_\alpha$ is strictly increasing, we have $-1\le u_\alpha<0$ on $\Omega_\alpha$. Also $F_\alpha$ extends continuously by zero to $\partial\mathbb D$. Hence $u_\alpha$ extends continuously by zero to the entire boundary of $\Omega_\alpha$, including the points $(\zeta,0)$ with $|\zeta|=1$. Therefore $u_\alpha$ is an exhaustion function.

For $0<r\le e^{-1}$, direct differentiation gives
\[
\frac{\mathrm d}{\mathrm dr}\log\frac{h_\alpha(r)}r
=\frac1r\left[\frac1\alpha\left(\log\frac1r\right)^{1/\alpha-1}-1\right]
\le0.
\]
Thus $h_\alpha(r)/r$ is nonincreasing. Since $h_\alpha(e^{-1})=1$, it follows that $h_\alpha(r)\ge r$. For $a,b>0$ with $a+b\le e^{-1}$, the same monotonicity gives
\[
h_\alpha(a)+h_\alpha(b)
\ge\frac a{a+b}h_\alpha(a+b)+\frac b{a+b}h_\alpha(a+b)
=h_\alpha(a+b).
\]
Consequently, by continuity also at zero,
\begin{equation}\label{eq:hartogs-alpha-modulus}
|h_\alpha(a)-h_\alpha(b)|
\le h_\alpha(|a-b|),\qquad 0\le a,b\le e^{-1}.
\end{equation}

Fix $P=(z,w)\in\Omega_\alpha$. It is easy to see that
\[
0<\delta_{\Omega_\alpha}(P)\le F_\alpha(z)-|w|\le e^{-1}.
\]
Choose $Q=(\zeta,\omega)\in\partial\Omega_\alpha$ such that $|P-Q|=\delta_{\Omega_\alpha}(P)$. Since $u_\alpha(Q)=0$, \eqref{eq:hartogs-alpha-modulus} yields
\begin{align*}
-u_\alpha(P)
&=u_\alpha(Q)-u_\alpha(P)\\
&\le\bigl||\zeta|^2-|z|^2\bigr|+|h_\alpha(|\omega|)-h_\alpha(|w|)|\\
&\le2|\zeta-z|+h_\alpha\bigl(\bigl||\omega|-|w|\bigr|\bigr)\\
&\le2\delta_{\Omega_\alpha}(P)+h_\alpha(\delta_{\Omega_\alpha}(P))\\
&\le3h_\alpha(\delta_{\Omega_\alpha}(P)).
\end{align*}
This proves \eqref{eq:hartogs-alpha-decay} uniformly on $\Omega_\alpha$.
\end{proof}

We continue to prove Theorem \ref{thm:hartogs-alpha}. The estimate below concerns points on the axis $w=0$. All constants in this section may depend on $\alpha$. First, we will compare the boundary distance on the axis with the radius of the corresponding fiber.

\begin{lemma}\label{lem:hartogs-alpha-boundary}
For every $z\in\mathbb D$,
\begin{equation}\label{eq:hartogs-alpha-boundary}
\delta_{\Omega_\alpha}(z,0)\asymp \exp\left[-\left(\log\frac{e}{1-|z|^2}\right)^\alpha\right].
\end{equation}
In particular, as $|z|\to1^-$,
\begin{equation}\label{eq:hartogs-alpha-log-distance}
\log\frac1{\delta_{\Omega_\alpha}(z,0)}=\left(\log\frac{e}{1-|z|^2}\right)^\alpha+O(1).
\end{equation}
\end{lemma}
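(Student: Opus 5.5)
The upper bound $\delta_{\Omega_\alpha}(z,0)\le F_\alpha(z)$ holds because the point $(z,F_\alpha(z))$ lies on $\partial\Omega_\alpha$ at distance exactly $F_\alpha(z)$ from $(z,0)$. So the real content of \eqref{eq:hartogs-alpha-boundary} is the lower bound $\delta_{\Omega_\alpha}(z,0)\gtrsim F_\alpha(z)$. I will obtain it as in the proof of Theorem~\ref{thm:hartogs}: show that $(\zeta,\omega)\mapsto F_\alpha(\zeta)-|\omega|$ is Lipschitz on $\overline{\mathbb D}\times\mathbb C$ and vanishes on $\partial\Omega_\alpha$. Then, for any boundary point $Q=(\zeta,\omega)$,
\[
F_\alpha(z)=\bigl(F_\alpha(z)-0\bigr)-\bigl(F_\alpha(\zeta)-|\omega|\bigr)\le L|(z,0)-Q|,
\]
and taking the infimum over $Q$ gives $\delta_{\Omega_\alpha}(z,0)\ge F_\alpha(z)/L$. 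The boundary of $\Omega_\alpha$ consists of the points $|\zeta|<1$, $|\omega|=F_\alpha(\zeta)$, together with the points $(\zeta,0)$, $|\zeta|=1$, where $F_\alpha$ extends by zero. In both cases the function vanishes, so only the Lipschitz bound needs proof.

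For the Lipschitz bound, write $s=\log(e/(1-|\zeta|^2))\ge1$. Then
\[
|\nabla F_\alpha(\zeta)|=\frac{2|\zeta|}{1-|\zeta|^2}\,\alpha s^{\alpha-1}e^{-s^\alpha}\le 2\alpha e^{s-1}s^{\alpha-1}e^{-s^\alpha},
\]
using $1/(1-|\zeta|^2)=e^{s-1}$. The function $s\mapsto e^{s-s^\alpha}s^{\alpha-1}$ is continuous on $[1,\infty)$ and tends to $0$ as $s\to\infty$, since $\alpha>1$ makes $s^\alpha-s-(\alpha-1)\log s\to+\infty$. It is therefore bounded by a constant $M_\alpha$. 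Hence $F_\alpha$ is Lipschitz on $\mathbb D$ with a constant depending only on $\alpha$. It extends continuously by zero to $\overline{\mathbb D}$, and since the gradient bound is uniform, the Lipschitz estimate persists up to $\partial\mathbb D$. Combined with the $1$-Lipschitz map $\omega\mapsto|\omega|$, this makes $F_\alpha(\zeta)-|\omega|$ Lipschitz with $L\le\max\{1,2\alpha M_\alpha e^{-1}\}\sqrt2$. This proves \eqref{eq:hartogs-alpha-boundary}.

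Formula \eqref{eq:hartogs-alpha-log-distance} follows by taking logarithms in $F_\alpha(z)/L\le\delta_{\Omega_\alpha}(z,0)\le F_\alpha(z)$: it gives $\log(1/\delta_{\Omega_\alpha}(z,0))=(\log(e/(1-|z|^2)))^\alpha+O(1)$, with $|O(1)|\le\log L$. Indeed this holds for every $z\in\mathbb D$, not only as $|z|\to1^-$. The only step needing care is the uniform gradient bound. It is exactly where $\alpha>1$ (or more generally superlinear growth of the exponent) enters: the factor $1/(1-|\zeta|^2)$ grows like $e^{s}$, and it is absorbed by $e^{-s^\alpha}$.
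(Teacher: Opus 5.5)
Your proof is correct and follows essentially the same route as the paper. The upper bound comes from the boundary point $(z,F_\alpha(z))$, and the lower bound from the fact that $(\zeta,\omega)\mapsto F_\alpha(\zeta)-|\omega|$ is Lipschitz on $\overline{\mathbb D}\times\mathbb C$ and vanishes on $\partial\Omega_\alpha$; your substitution $s=\log(e/(1-|\zeta|^2))$ simply makes explicit the uniform gradient bound that the paper reads off from $R_\alpha'(s)$. One minor remark: the gradient bound also holds for $\alpha=1$, where $F_1(\zeta)=(1-|\zeta|^2)/e$, so this step is not exactly where $\alpha>1$ is needed, though this does not affect the argument.
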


\begin{proof}
For simplicity, write
\[
R_\alpha(s)=\exp\left[-\left(\log\frac{e}{s}\right)^\alpha\right],\quad 0<s\le1,
\]
and
\[
F_\alpha(z):=R_\alpha(1-|z|^2)=\exp\left[-\left(\log\frac{e}{1-|z|^2}\right)^\alpha\right].
\]
The point $(z,F_\alpha(z))$ lies on $\partial\Omega_\alpha$, so $\delta_{\Omega_\alpha}(z,0)\le F_\alpha(z)$. On the other hand,
\[
R_\alpha'(s)=\frac\alpha s\left(\log\frac{e}{s}\right)^{\alpha-1}R_\alpha(s).
\]
Notice that the right-hand side tends to zero as $s\to0^+$. It is also bounded as $s\to1^-$. Consequently $F_\alpha$ has bounded gradient on $\mathbb D$ and extends by zero to a Lipschitz function on $\overline{\mathbb D}$. Let $M_\alpha$ be a Lipschitz constant of $F_\alpha$. The function $G(z,w)=F_\alpha(z)-|w|$ is $\sqrt{1+M_\alpha^2}$-Lipschitz on $\overline{\mathbb D}\times\mathbb C$ and vanishes on $\partial\Omega_\alpha$, including the points $(\zeta,0)$ with $|\zeta|=1$. Hence
\[
F_\alpha(z)
=G(z,0)
\le\sqrt{1+M_\alpha^2}\,\delta_{\Omega_\alpha}(z,0).
\]
This proves both assertions.
\end{proof}

The next lemma supplies the estimate for the metric in the axial direction. Its proof uses the extremal formula for the weighted Bergman metric and a local construction of a holomorphic test function.

\begin{lemma}\label{lem:hartogs-alpha-metric}
As $|z|\to1^-$,
\begin{equation}\label{eq:hartogs-alpha-metric}
\beta_{\Omega_\alpha}((z,0);(1,0))
\lesssim\frac{1}{1-|z|^2}\left(\log\frac{e}{1-|z|^2}\right)^{(\alpha-1)/2}.
\end{equation}
\end{lemma}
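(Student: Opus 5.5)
We bound $\beta_{\Omega_\alpha}((z,0);(1,0))$ through the standard extremal characterization of the Bergman metric:
\[
\beta_\Omega(P;X)=\frac{\sup\{|Xf(P)|:f\in A^2(\Omega),\ f(P)=0,\ \|f\|\le1\}}{\sqrt{K_\Omega(P)}}.
\]
This reduces the statement to two estimates. We need a lower bound for $K_{\Omega_\alpha}(z,0)$, and an upper bound for $|\partial_z f(z,0)|$ over the extremal class.

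\textbf{Reduction to one variable.} Since $\Omega_\alpha$ is a complete Hartogs domain, each $f\in A^2(\Omega_\alpha)$ has an expansion $f=\sum_k f_k(z)w^k$. Integrating the fibres shows that the $f_k$ are mutually orthogonal, with
\[
\|f\|^2=\sum_k \frac{\pi}{k+1}\int_{\mathbb D}|f_k|^2F_\alpha^{2k+2}\,\mathrm dV(z).
\]
Only $f_0$ contributes to the value and to the $\partial_z$-derivative at $(z,0)$. Hence $K_{\Omega_\alpha}(z,0)=\pi^{-1}K_{\varphi}(z)$ and $\beta_{\Omega_\alpha}((z,0);(1,0))$ equals the weighted Bergman metric $\beta_\varphi(z)$ on $\mathbb D$ with weight $e^{-\varphi}=F_\alpha^2$, i.e.
\[
\varphi(z)=2\Bigl(\log\frac{e}{1-|z|^2}\Bigr)^\alpha.
\]
This weight is subharmonic and radial. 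The whole problem thus becomes a one-dimensional weighted estimate. By rotation we may take $z=r\in(0,1)$.

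\textbf{Lower bound for $K_\varphi(r)$ (the test function).} Set $s=1-r^2$ and $\ell=\log(e/s)$. Near the boundary the weight varies on the scale
\[
\delta:=\frac{s}{\ell^{(\alpha-1)/2}},
\]
since $\Delta\varphi\asymp \ell^{\alpha-1}/s^2$ to leading order. We therefore construct a holomorphic $h$ with $|h(r)|=1$ that localizes $\int|h|^2e^{-\varphi}$ to a disc of radius $\asymp\delta$ about $r$. The natural choice is
\[
h(\zeta)=\exp\bigl[q(\zeta)-q(r)\bigr],
\]
where $q$ is the holomorphic quadratic Taylor polynomial of $\varphi/2$ at $r$. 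For a radial weight one can take $q$ holomorphic in $\zeta$, matching $\varphi$ up to second order. Then
\[
2\,\mathrm{Re}\,q(\zeta)-\varphi(\zeta)=\varphi(r)-c\,\Delta\varphi(r)|\zeta-r|^2+O(\text{third order}).
\]
This gives
\[
\int_{\mathbb D}|h|^2e^{-\varphi}\lesssim e^{-\varphi(r)}\delta^2,
\qquad\text{hence}\qquad
K_\varphi(r)\gtrsim \frac{e^{\varphi(r)}}{\delta^2}.
\]
\textbf{Main obstacle: the global tail.} We must control $\int|h|^2e^{-\varphi}$ away from the $\delta$-disc, in particular near $\partial\mathbb D$ and on the far side of the disc. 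There the quadratic model of $\varphi$ fails, and $\mathrm{Re}\,q$ could exceed $\varphi/2$. We expect to use a Hörmander/Ohsawa–Takegoshi $\bar\partial$-correction instead of explicit control. Take $\chi$ a cutoff on $D(r,\delta)$ and solve $\bar\partial v=\bar\partial(\chi h)$ in $L^2(e^{-\varphi-2\log|\zeta-r|})$. Subharmonicity of $\varphi$ plus the singular term gives
\[
\|v\|_\varphi^2\lesssim\int|\bar\partial\chi|^2|h|^2e^{-\varphi}\lesssim e^{-\varphi(r)}\delta^2,
\]
so only the local Taylor control on $D(r,\delta)$ is needed. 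The key point is that $\varphi$ deviates from its quadratic model by $O(1)$ on that disc. This holds because the third derivatives are of size $\ell^{\alpha-1}/s^3$ (times lower-order factors), so $\delta^3\cdot\ell^{\alpha-1}/s^3=\ell^{-(\alpha-1)/2}\lesssim1$.

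\textbf{Upper bound for derivatives and conclusion.} For $f$ with $f(r)=0$ and $\|f\|_\varphi\le1$, write $g=fe^{-q}$ on $D(r,\delta)$. Then $|g|^2\lesssim|f|^2e^{-\varphi}e^{\varphi(r)}$ there. The mean-value inequality applied to $g'$ gives
\[
|f'(r)|=|g'(r)|e^{\mathrm{Re}\,q(r)}\lesssim \delta^{-2}\,e^{\varphi(r)/2}.
\]
Combining with the kernel bound,
\[
\beta_\varphi(r)\lesssim \frac{\delta^{-2}e^{\varphi(r)/2}}{\delta^{-1}e^{\varphi(r)/2}}=\frac1\delta=\frac{1}{1-|z|^2}\Bigl(\log\frac{e}{1-|z|^2}\Bigr)^{(\alpha-1)/2},
\]
which is the claimed estimate.
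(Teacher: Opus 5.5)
Your reduction to $A^2_\varphi(\mathbb D)$, the scale $\delta$ (the paper's $s(z)$), the local comparison of $\varphi$ with $2\,\mathrm{Re}\,q$ on $D(r,\delta)$, and the Cauchy-estimate bound $|f'(r)|\lesssim\delta^{-2}e^{\varphi(r)/2}$ all match the paper. The paper uses the affine $H_z(\zeta)=\varphi(z)/2+\varphi_z(z)(\zeta-z)$, since an $O(1)$ error on $D(r,\delta)$ only needs $|D^2\varphi|\lesssim\delta^{-2}$. The gap is in the $\bar\partial$-correction, which is the only source of the lower bound $K_\varphi(r)\gtrsim\delta^{-2}e^{\varphi(r)}$.

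Your chain $\|v\|_\varphi^2\lesssim\int|\bar\partial\chi|^2|h|^2e^{-\varphi}\lesssim\delta^2e^{-\varphi(r)}$ is wrong. The middle integral is $\asymp\delta^{-2}\cdot\delta^2\cdot e^{-\varphi(r)}=e^{-\varphi(r)}$, and the factor $\delta^2$ can only come from $1/\varphi_{\zeta\bar\zeta}\asymp\delta^2$ in H\"ormander's estimate. Doing the bookkeeping correctly for your weight $\varphi+2\log|\zeta-r|$ gives
\[
\int_{\mathbb D}\frac{|v|^2e^{-\varphi}}{|\zeta-r|^2}\le\int_{\mathbb D}\frac{|\bar\partial\chi|^2|h|^2e^{-\varphi}}{|\zeta-r|^2\,\varphi_{\zeta\bar\zeta}}\lesssim e^{-\varphi(r)},
\]
because $|\zeta-r|^{-2}\asymp\delta^{-2}$ on $\mathrm{supp}\,\bar\partial\chi$. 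Away from $r$, however, $|\zeta-r|^{-2}$ is only bounded below by $1/4$. So you get merely $\|v\|_\varphi^2\lesssim e^{-\varphi(r)}$, hence $K_\varphi(r)\gtrsim e^{\varphi(r)}$ and $\beta\lesssim\delta^{-2}$, which is far weaker than the claim. An untruncated logarithmic pole costs exactly the factor $\delta^{-2}$ you need.

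The missing idea is to localize the pole at scale $\delta$. The paper takes $b=\theta\bigl(|\zeta-r|/(A\delta)\bigr)\log\bigl(|\zeta-r|^2/(A\delta)^2\bigr)$, with $\theta=1$ on $[0,1]$ and $\theta=0$ on $[2,\infty)$. This $b$ has three properties:
\begin{itemize}
\item $b\le\log 4$ on $\mathbb D$, so $e^{-\varphi-b}\ge e^{-\varphi}/4$ globally;
\item $e^{-b}\le 4A^2$ on $\mathrm{supp}\,\bar\partial\chi$;
\item $e^{-b}$ is still non-integrable at $r$, which forces $v(r)=0$.
\end{itemize}
The price is that $b$ fails to be subharmonic on $A\delta\le|\zeta-r|\le 2A\delta$, where $|\partial\bar\partial b|\lesssim(A\delta)^{-2}$. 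For $A$ large this is absorbed by $\varphi_{\zeta\bar\zeta}\gtrsim\delta^{-2}$, using that $\varphi_{\zeta\bar\zeta}\asymp\delta(r)^{-2}$ uniformly on $D(r,2A\delta)$ (true because $\delta=o(1-r)$). The result is $\|v\|_\varphi^2\lesssim A^2\delta^2e^{-\varphi(r)}$.

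Alternatively, your quadratic $q$ allows a pole-free fix. Put the cutoff at scale $M\delta$. The Gaussian factor $e^{-\varphi_{\zeta\bar\zeta}(r)|\zeta-r|^2}$ on the cutoff annulus then makes the H\"ormander bound $\lesssim e^{-cM^2}\delta^2e^{-\varphi(r)}$. The mean-value inequality on $D(r,\delta)$, where $v$ is holomorphic, gives $|v(r)|\le 1/2$ for $M$ large. The affine $H_z$ would not work here, since it has no decay in the tangential direction at scale $\delta$. Either way, this step must be supplied for the lemma to follow.
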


\begin{proof}
Put $\varphi(z)=2(\log(e/(1-|z|^2)))^\alpha$, and let $K_\varphi$ denote the diagonal kernel of
\[
A^2_\varphi(\mathbb D)=\left\{f\in\mathcal O(\mathbb D):\|f\|_\varphi^2:=\int_{\mathbb D}|f|^2e^{-\varphi}<\infty\right\}.
\]
By Ligocka's formula (cf. \cite{L89}), we have
\[
K_{\Omega_\alpha}((z,0))=\pi^{-1}K_\varphi(z),
\]
so that
\begin{equation}\label{eq:hartogs-alpha-weighted-kernel}
\beta_{\Omega_\alpha}((z,0);(1,0))^2 =\partial_z\partial_{\bar z}\log K_\varphi(z).
\end{equation}
By the extremal formula for the Bergman metric, we have
\begin{equation}\label{eq:hartogs-alpha-metric-extremal}
\partial_z\partial_{\bar z}\log K_\varphi(z)=\frac{J_\varphi(z)}{K_\varphi(z)},\qquad J_\varphi(z)=\sup\{|f'(z)|^2:f(z)=0,\ \|f\|_\varphi\le1\}.
\end{equation}
We shall prove an upper bound for $J_\varphi$ and a lower bound for
$K_\varphi$ as $|z|\rightarrow1-$.

Direct differentiation gives
\begin{equation}\label{eq:hartogs-alpha-laplacian}
\varphi_{z\bar z}(z)
=\frac{2\alpha}{(1-|z|^2)^2}\left(\left(\log\frac{e}{1-|z|^2}\right)^{\alpha-1}+(\alpha-1)|z|^2\left(\log\frac{e}{1-|z|^2}\right)^{\alpha-2}\right)
\end{equation}
and
\begin{equation}\label{eq:hartogs-alpha-hessian}
\varphi_{zz}(z)
=\frac{2\alpha\bar{z}^2}{(1-|z|^2)^2}\left(\left(\log\frac{e}{1-|z|^2}\right)^{\alpha-1}+(\alpha-1)\left(\log\frac{e}{1-|z|^2}\right)^{\alpha-2}\right).
\end{equation}
Put $s(z):=(1-|z|^2)(\log(e/(1-|z|^2)))^{-(\alpha-1)/2}$. Then \eqref{eq:hartogs-alpha-laplacian} and \eqref{eq:hartogs-alpha-hessian} imply
\begin{equation}\label{eq:hartogs-alpha-local-curvature-z}
\varphi_{z\bar z}(z)\asymp s(z)^{-2},\qquad |D^2\varphi(z)|\asymp s(z)^{-2}.
\end{equation}
Here $D^2\varphi$ is the real Hessian. Since $s(z)=o(1-|z|)$ as $|z|\rightarrow1-$, for any $A>2$, we may take $z$ sufficiently close to the boundary so that $D(z,2As(z))\subset\mathbb D$. Moreover, for all $\zeta\in D(z,2As(z))$, we have $s(\zeta)\asymp s(z)$. This combined with \eqref{eq:hartogs-alpha-local-curvature-z} yields
\begin{equation}\label{eq:hartogs-alpha-local-curvature}
\varphi_{\zeta\bar{\zeta}}(\zeta)\asymp s(z)^{-2},\qquad |D^2\varphi(\zeta)|\lesssim s(z)^{-2}
\end{equation}
as $|z|\rightarrow1-$. Here the implicit constants can be chosen independent of $A$. Let
\[
H_z(\zeta)=\frac{\varphi(z)}2+\varphi_z(z)(\zeta-z).
\]
Taylor's formula and \eqref{eq:hartogs-alpha-local-curvature} imply
\begin{equation}\label{eq:hartogs-alpha-affine-weight}
|\varphi(\zeta)-2\operatorname{Re}H_z(\zeta)|\le C, \qquad \zeta\in D(z,s(z)).
\end{equation}
If $f(z)=0$ and $\|f\|_\varphi\le1$, set $h=f e^{-H_z}$. Then $h(z)=0$ and
\[
\int_{D(z,s(z))}|h|^2 \le e^C\int_{D(z,s(z))}|f|^2e^{-\varphi}\le e^C.
\]
The local $L^2$ Cauchy estimate gives $|h'(z)|^2\lesssim s(z)^{-4}$. Since
$h'(z)=f'(z)e^{-\varphi(z)/2}$, we obtain $|f'(z)|^2\lesssim s(z)^{-4}e^{\varphi(z)}$, and hence
\begin{equation}\label{eq:hartogs-alpha-derivative-upper}
J_\varphi(z)\lesssim e^{\varphi(z)}s(z)^{-4}.
\end{equation}

Next, we shall construct $F_z\in A^2_\varphi(\mathbb D)$ with
\begin{equation}\label{eq:hartogs-alpha-test-function}
F_z(z)=1,\qquad \|F_z\|_\varphi^2\lesssim s(z)^2e^{-\varphi(z)},
\end{equation}
so that $K_\varphi(z)\gtrsim s(z)^{-2}e^{\varphi(z)}$. Consider the holomorphic function
\[
g_z(\zeta)=\exp[H_z(\zeta)-H_z(z)].
\]
It follows that $g_z(z)=1$, and \eqref{eq:hartogs-alpha-affine-weight} gives
\begin{equation}\label{eq:hartogs-alpha-local-test}
|g_z(\zeta)|^2e^{-\varphi(\zeta)}\lesssim e^{-\varphi(z)}, \qquad \zeta\in D(z,s(z)).
\end{equation}
Choose $\chi_z\in C_0^\infty(D(z,s(z)))$ with $0\le\chi_z\le1$, $\chi_z=1$ on $D(z,s(z)/2)$, and $|\bar\partial\chi_z|\lesssim s(z)^{-1}$. Then $\chi_zg_z\in C^\infty_0(\mathbb{D})$. Moreover, fix a smooth function $\theta:[0,\infty)\to[0,1]$ with $\theta\equiv1$ on $[0,1]$ and $\theta\equiv0$ on $[2,\infty)$, and put
\[
b_z:=\theta\left(\frac{|\zeta-z|}{As(z)}\right)\log\frac{|\zeta-z|^2}{A^2s(z)^2},\qquad \Phi_z(\zeta):=\varphi(\zeta)+b_z,
\]
where $A>2$. Notice that the function $b_z$ is subharmonic on $D(z,As(z))$, harmonic on $D(z,As(z))\setminus\{z\}$ and vanishes outside $D(z,2As(z))$. On $D(z,2As(z))\setminus D(z,As(z))$, straightforward calculation gives
\[
|\partial_\zeta\partial_{\bar\zeta}b_z|\lesssim \frac{1}{A^2s(z)^2}.
\]
Choose $A>2$ so large that $\Phi_z$ is subharmonic on $\mathbb D$ with a logarithmic pole at $z$, in view of \eqref{eq:hartogs-alpha-local-curvature}. On the support of
$\bar\partial\chi_z$, we have
\begin{equation}\label{eq:hartogs-alpha-supp-chi}
(\Phi_z)_{\zeta\bar\zeta}=\varphi_{\zeta\bar\zeta}\gtrsim c_0s(z)^{-2},\qquad e^{-b_z}\le4A^2.
\end{equation}
The weighted H\"ormander $L^2$ estimate (cf. \cite{H65}) therefore supplies a function $v_z$ with
\begin{equation}\label{eq:hartogs-alpha-dbar}
\bar{\partial}v_z=g_z\bar\partial\chi_z,
\end{equation}
satisfying
\begin{equation}\label{eq:hartogs-alpha-correction}
\int_{\mathbb D}|v_z|^2e^{-\Phi_z}
\lesssim \int_{\mathbb D}\frac{|g_z\bar\partial\chi_z|^2}{(\Phi_z)_{\zeta\bar\zeta}}e^{-\Phi_z}
\lesssim s(z)^2e^{-\varphi(z)}<+\infty.
\end{equation}
The last inequality uses \eqref{eq:hartogs-alpha-local-test}, \eqref{eq:hartogs-alpha-supp-chi} and the fact that the support of $\bar\partial\chi_z$ has area at most $Cs(z)^2$.
The finite integral in \eqref{eq:hartogs-alpha-correction} forces $v_z(z)=0$.
Moreover, since $b_z\le\log4$ on $\mathbb D$, we have
\begin{equation}\label{eq:hartogs-alpha-weightednorm-v}
\int_{\mathbb D}|v_z|^2e^{-\varphi}
\le4\int_{\mathbb D}|v_z|^2e^{-\Phi_z}
\lesssim s(z)^2e^{-\varphi(z)}.
\end{equation}
Consequently $F_z=\chi_z g_z-v_z$ is holomorphic on $\mathbb D$,
$F_z(z)=1$, and
\[
\|F_z\|_\varphi^2
\lesssim \|\chi_zg_z\|_\varphi^2+\|v_z\|_\varphi^2
\lesssim s(z)^2e^{-\varphi(z)}
\]
by \eqref{eq:hartogs-alpha-local-test}  and \eqref{eq:hartogs-alpha-weightednorm-v}, i.e., \eqref{eq:hartogs-alpha-test-function} holds. The extremal characterization of the weighted Bergman kernel now yields
\begin{equation}\label{eq:hartogs-alpha-diagonal}
K_\varphi(z)\ge\frac{|F_z(z)|^2}{\|F_z\|_\varphi^2} \gtrsim s(z)^{-2}e^{\varphi(z)}.
\end{equation}

Substituting \eqref{eq:hartogs-alpha-derivative-upper} and \eqref{eq:hartogs-alpha-diagonal} into \eqref{eq:hartogs-alpha-metric-extremal}, we obtain
\[
\partial_z\partial_{\bar z}\log K_\varphi(z)
\lesssim\frac{s(z)^{-4}e^{\varphi(z)}}{s(z)^{-2}e^{\varphi(z)}}
\lesssim s(z)^{-2}
\asymp \frac{1}{(1-|z|^2)^2}\left(\log\frac{e}{1-|z|^2}\right)^{\alpha-1}.
\]
Taking square roots in \eqref{eq:hartogs-alpha-weighted-kernel} proves \eqref{eq:hartogs-alpha-metric}.
\end{proof}

\begin{proof}[Proof of Theorem~\ref{thm:hartogs-alpha}]
Consider the radial curve $\gamma(r)=(r,0)$, $0\le r\le1-t$. The Bergman metric is bounded on each compact subset of $\Omega_\alpha$. Lemma~\ref{lem:hartogs-alpha-metric} therefore gives
\begin{align*}
d_{B,\Omega_\alpha}((0,0),P_t)
&\le\ell_{B,\Omega_\alpha}(\gamma)\\
&\lesssim1+\int_{1/2}^{1-t}\frac{(\log(e/(1-r^2)))^{(\alpha-1)/2}}{1-r^2}\,\mathrm dr.
\end{align*}
Make the substitution $s=\log(e/(1-r^2))$, so that $\mathrm ds =2r\,\mathrm dr/(1-r^2)$. Then the last expression is bounded (up to multiplying a constant) by
\[
1+\int_{\log(4e/3)}^{\log(e/(2t-t^2))}s^{(\alpha-1)/2}\,\mathrm ds
\lesssim 1+\left(\log\frac{1}{2t-t^2}\right)^{(\alpha+1)/2}
\lesssim \left(\log\frac{1}{t}\right)^{(\alpha+1)/2}
\]
as $t\to0^+$, which proves
\[
d_{B,\Omega_\alpha}((0,0),P_t)\lesssim\left(\log\frac1t\right)^{(\alpha+1)/2}.
\]
This combined with \eqref{eq:hartogs-alpha-log-distance} and \eqref{eq:hartogs-alpha-lower} gives \eqref{eq:hartogs-alpha-distance-delta}.
\end{proof}

{\bf Acknowledgements.}\quad The authors are grateful to ChatGPT 6 Astra for suggesting the use of the volume-capacity inequality in \cite{ACKPZ09} instead of the ``quasi-symmetry'' of the pluricomplex Green function, constructing the example in \S\,\ref{sec:Hartogs} and  language editing. They reviewed and verified the material they adopted and take final responsibility for all arguments and conclusions in the paper and for its adherence to standards of academic integrity.

\end{document}